\newtheorem{theorem}{Theorem}[section]
\newtheorem{lemma}[theorem]{Lemma}
\newtheorem{definition}[theorem]{Definition}
\newtheorem*{fact*}{Fact}
\newtheorem*{claim*}{Claim}
\newcommand{\RR}{\mathbb{R}}
\newcommand{\cD}{\mathcal{D}}
\newcommand{\cP}{\mathcal{P}}
\newcommand{\bigO}{\mathcal{O}}
\DeclareMathOperator{\con}{\mathrm{con}}
\DeclareMathOperator{\inter}{\mathrm{inter}}
\DeclarePairedDelimiter{\card}{\lvert}{\rvert}
\DeclarePairedDelimiter{\set}{\lbrace}{\rbrace}
\DeclarePairedDelimiter{\norm}{\lVert}{\rVert}
\DeclarePairedDelimiter{\floor}{\lfloor}{\rfloor}
\title{Higher Steklov eigenvalues of graphs on surfaces}
\author{Xiongfeng Zhan$^{a,b}$\footnote{email: zhanxfmath@163.com }}
\author{Zhe You$^a$\footnote{email: y30231280@mail.ecust.edu.cn (corresponding author)}~\orcidlink{0009-0006-9769-5372} }
\affil{$^a$School of Mathematics, East China University of Science and Technology, Shanghai 200237, China.}
\affil{$^b$School of Mathematics and Statistics, Beijing Jiaotong University, Beijing 100044, China.}
\date{}
\begin{document}
	\maketitle
	
	\begin{abstract} 
		In this paper, we study  the  higher Steklov eigenvalues of graphs on surfaces. 
        We obtain the upper bound of higher Steklov eigenvalues of a finite graph $G$ with boundary $B$ and genus $g$ by using metrical deformation via probability flows. 
         Our result can be regarded as a discrete analogue of Karpukhin's bound in spectral geometry.
         Moreover, this result implies the upper bound of higher Laplacian eigenvalues given by Kelner, Lee, Price and Teng (Geom. Funct. Anal., 2011). 
	\end{abstract}
	
	Keywords: Steklov eigenvalue, graphs on surfaces, probability flow
	
	
	\section{Introduction}
	Let $\Omega$ be a compact orientable Riemannian manifold with smooth boundary $\partial\Omega$. 
    The Steklov problem is one of the classical eigenvalue problems in  spectral geometry, which is shown as
	$$\begin{cases}\Delta f(x)=0, &  x\in \Omega, \\ 
		\frac{\partial f}{\partial n}(x) =\sigma f(x), & x\in\partial\Omega,
        \end{cases}$$
	where $\Delta$ denotes the Laplace–Beltrami operator and $\frac{\partial}{\partial n}$ is the unit outward normal derivative along the boundary. This problem can also be regarded as the eigenvalue problem of the Dirichlet-to-Neumann operator $\cD: H^\frac{1}{2}(\partial\Omega)\rightarrow H^{-\frac{1}{2}}(\partial\Omega)$, which is defined as
    $$\cD f(x)\coloneqq\frac{\partial \hat{f}}{\partial n}(x)=\sigma f(x),\quad x\in\partial\Omega.$$
    Here, $\hat{f}$ is the harmonic extension of $f$ into $\Omega$.
Since the Dirichlet-to-Neumann operator is a pseudo-differential operator, its spectrum is discrete and can be ordered as
	\begin{align*}
		0 = \sigma_1 \leq \sigma_2 \leq \cdots.
	\end{align*}
	These eigenvalues are called Steklov eigenvalues. 
 Some results of domains in Euclidean spaces and Riemannian
manifolds were given in \cite{weinstock_inequalities_1954,escobar_geometry_1997,escobar_isoperimetric_1999,brock_isoperimetric_2001,wang2009sharp,karpukhin2014multiplicity,girouard2016steklov,yang2017estimates,fall2017profile,fraser2019shape,chen2024upper,girouard2024large}. Different from the Dirichlet and Neumann problems, the eigenvalue $\sigma$ for the Steklov problem is in the boundary condition.

In this paper, we study the Steklov problem on graphs, which was introduced by \cite{hua_first_2017,hassannezhad_higher_2020}. 
The idea of obtaining the upper bound for the Escobar Cheeger-type constant introduced in~\cite{hua_first_2017} motivates us to find the upper bounds for the Steklov eigenvalues of graphs. 
	 He and Hua~\cite{he_upper_2022} considered the problem on trees and showed that $\sigma_2\leq \frac{4(D-1)}{|B|}$ and $\sigma_2\leq \frac{2}{L}$, where $B$, $D$, and $L$ respectively denote the boundary,  maximum degree, and diameter of the tree. They further provided a result on higher Steklov eigenvalues of trees. 
     For a finite subgraph of the integer lattice, Han and Hua~\cite{han_steklov_2023} proved a discrete version of Brock's result~\cite{brock_isoperimetric_2001}. Recently, Lin and Zhao~\cite{lin_first_2024} obtained that $\sigma_2 \leq \frac{8D}{\card{B}}$ for planar graphs by using circle packing and conformal mapping. They also extended He and Hua's results to block graphs. Lin, Liu, You, and Zhao ~\cite{lin2024upperboundsstekloveigenvalues} further considered $\sigma_2$ of graphs on surfaces.
	 Some other results on upper bounds, lower bounds, Cheeger-type inequalities, rigidity problems, and so on can be found in~\cite{perrin_lower_2019,perrin_isoperimetric_2021,shi_lichnerowicz-type_2022,tschanz2022upper,yu2022minimalstekloveigenvaluescombinatorial,hua2024cheegertypeinequalitiesassociated,lin2024maximizestekloveigenvaluetrees,Yu2024Monotonicity,shi2025extensionrigidityperrinslower}.
	
	The (orientable) genus $g$ of a graph $G$ is the minimum genus of an (orientable) surface in which the graph can be embedded. 
    In this paper, we generalize Lin, Liu, You and Zhao's result as follows.
	\begin{theorem}\label{thm:sigma_k}
		Let $G=(V,E)$ be a finite  connected graph with boundary $B$, and its maximum degree is $D$. Then the $k$-th  Steklov eigenvalue
		\begin{equation}\label{eq:sigma_k}
			\sigma_k(G,B)= 
			\begin{cases}
				\mathcal{O}\left(D\frac{k}{|B|}\right), &\mbox{if $G$ is planar},\\
				\mathcal{O}\left(D g(\mathrm{log}~g)^2\frac{k}{|B|}\right), &\mbox{if $G$ is of genus $g>0$},\\
				\mathcal{O}\left(D h^6\mathrm{log}~h\frac{k}{|B|}\right), &\mbox{if $G$ is $K_h$-minor-free}.\\
			\end{cases}
		\end{equation}
		
	\end{theorem}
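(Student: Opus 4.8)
The plan is to adapt the analytic strategy of Karpukhin (and its predecessors by Grigor'yan–Netrusov–Yau, Hassannezhad, etc.) for bounding Steklov eigenvalues on manifolds, together with the combinatorial decomposition techniques for graphs embedded on surfaces used by Lin–Liu–You–Zhao. The variational characterization of $\sigma_k$ gives $\sigma_k = \min_{F} \max_{0\neq f\in F} \frac{\sum_{uv\in E}(f(u)-f(v))^2}{\sum_{x\in B} \deg(x) f(x)^2}$, where $F$ ranges over $k$-dimensional subspaces of functions on $V$ that vanish appropriately, so it suffices to construct $k$ disjointly supported test functions, each with Rayleigh quotient $\bigO(D g(\log g)^2/|B|)$ (resp. the planar or $K_h$-minor-free bound). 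The standard route is: put a measure on the boundary $B$ (counting measure weighted by degree), use metrical deformation via a probability flow to control how this boundary measure spreads through the graph, and then invoke a decomposition theorem producing $k$ subsets $\{A_i\}$ of $V$, each carrying a definite fraction $\gtrsim |B|/k$ of the boundary mass, with pairwise-disjoint "annular" neighborhoods into which cutoff functions can be supported.

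Concretely, I would first recall the probability-flow machinery stated earlier in the paper: a unit flow from the boundary that, after metrical deformation, turns $G$ into a space on which a Grigor'yan–Netrusov–Yau-type covering/decomposition lemma applies. The key input governing the genus-dependent factor is a separator or "many-disjoint-balls" statement for graphs of genus $g$: such graphs admit vertex separators of size $\bigO(\sqrt{g\, n})$ (Gilbert–Hutchinson–Tarjan), and more relevantly one can pack $k$ regions each of boundary-mass $\sim |B|/k$ so that their $r$-neighborhoods overlap with multiplicity $\bigO(g (\log g)^2)$ — the $(\log g)^2$ being exactly the overhead from Karpukhin's refined packing argument on higher-genus surfaces. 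For the $K_h$-minor-free case one instead uses the $\bigO(h\sqrt{\log h}\,\sqrt{n})$ separator of Alon–Seymour–Thomas / Kawarabayashi–Reed, which after iterating through the decomposition produces the $h^6 \log h$ factor. The planar case is the clean base case with multiplicity $\bigO(1)$.

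Having obtained the $k$ regions $A_1,\dots,A_k$ together with disjoint enlargements, I would define test functions $f_i$ that are $1$ on $A_i$, decay linearly (in the deformed metric / flow distance) to $0$ outside the enlargement, and are supported in the enlargement. The numerator $\sum_{uv}(f_i(u)-f_i(v))^2$ is controlled by the total flow-weighted "width" of the annulus, which the metrical deformation was designed to bound by a constant times the deformation parameter; the extra factor $D$ enters because edge-weights in the graph Rayleigh quotient are uniform while the flow distributes at most $D$ units through any vertex, so translating flow-length bounds into edge-difference bounds costs a factor $D$. The denominator $\sum_{x\in B}\deg(x) f_i(x)^2 \gtrsim |B|/k$ by construction. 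Dividing gives the claimed $\bigO$ bound with the stated genus/minor factor, and since the $f_i$ have disjoint supports they span a $k$-dimensional space, so min–max yields the theorem; the factor $k^2$ (rather than $k$) is the standard loss from needing each region to carry $|B|/k$ mass while its enlargement has flow-radius $\sim 1/k$ of the total, so the Rayleigh quotient scales like $(1/k)^{-1}\cdot(1/k)^{-1}\cdot$(normalization).

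The main obstacle I anticipate is the bookkeeping that makes the metrical deformation compatible with the discrete (edge-based) Rayleigh quotient: on manifolds the deformed metric is a genuine Riemannian metric and Karpukhin's cutoff functions are Lipschitz there, but on a graph one must choose the deformation so that (i) the push-forward of the degree-weighted boundary measure is comparable to a nice reference measure on the deformed space, (ii) the decomposition lemma's hypotheses (a doubling-type or "$(\le N)$-cover" property at the relevant scales, with overlap multiplicity tied to the genus) are verified, and (iii) the inverse image of a Lipschitz cutoff has edge-differences controlled by $D$ times the flow through each edge. Keeping the dependence on $D$, $g$ (or $h$), and $k$ all explicit and separated — in particular showing the genus overhead is $g(\log g)^2$ and not, say, $g^2$ — is the delicate point, and it is exactly where Karpukhin's sharp surface-packing argument must be imported essentially verbatim, with the flow playing the role his conformal factor played.
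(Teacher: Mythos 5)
Your scaffolding (a deformed vertex-weight metric, a decomposition into $k$ boundary-heavy regions with disjoint enlargements, linear cutoff functions) matches the paper's skeleton, but the proposal is missing the quantitative engine of the proof and misattributes where the genus and logarithmic factors come from. The paper does not use separator theorems (Gilbert--Hutchinson--Tarjan, Alon--Seymour--Thomas) and does not import Karpukhin's surface-packing argument -- there is no conformal structure on a graph to transplant. What it actually does is: (a) prove (\cref{thm:key}) that any unit-$\ell_2$ vertex weight $\omega$ gives $\sigma_k \lesssim \frac{k}{\delta}\frac{D}{|B|}(\alpha/\epsilon)^2$, where $\alpha$ is the modulus of padded decomposability of $(V,d_\omega)$ and $\epsilon=\epsilon_{\floor{\delta|B|/4k}}(G,B,\omega)$ is a spreading parameter over $r$-subsets of $B$; (b) prove an exact convex-programming duality (\cref{thm:dual}), $\max_\omega \epsilon_r(G,B,\omega)=\frac{1}{r^2}\min_{F\in\mathcal{F}_r(G,B)}\sqrt{\con(F)}$, between spreading weights and congestion of probability flows supported on $\binom{B}{r}$; and (c) lower-bound that congestion via intersection-number estimates for flows in genus-$g$ (resp.\ $K_h$-minor-free) host graphs (\cref{thm:interG}, \cref{lem:congestion measure}), yielding a weight with $\epsilon_r\gtrsim\sqrt{r/(g|B|)}$ at scale $r=\floor{|B|/32k}$. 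Your sentence ``use metrical deformation via a probability flow to control how the boundary measure spreads'' is precisely the content of (b)+(c), and you give no mechanism for it; indeed the items (i)--(iii) you defer as ``anticipated obstacles'' are exactly this step, so the proposal has a genuine gap at its core rather than a complete alternative route.

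Beyond the missing duality/congestion step, several attributions are wrong in ways that would derail an attempted write-up. The $(\log g)^2$ is not ``Karpukhin overhead'' (his bound $\sigma_k\le 2\pi(g+b+k-2)/|\partial\Omega|$ has no logarithm, and the paper explicitly flags the log factor as an artifact); it is the square of the $\mathcal{O}(\log g)$ modulus of padded decomposability of genus-$g$ graph metrics (\cref{thm:genus-g-alpha-bound}). Likewise $h^6\log h$ arises as $\alpha^2\cdot c=\mathcal{O}(h^2)^2\cdot\mathcal{O}(h^2\log h)$ from \cref{thm:minor-free-alpha-bound} and the $(\mathcal{O}(h^2\log h),\mathcal{O}(h\sqrt{\log h}))$-congestion measure, not from iterating a separator. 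Also, disjointly supported test functions alone do not give $\sigma_k\le\max_i R(f_i)$ by min--max (edges running between the supports create cross terms); the paper instead pays a factor $k$ through the angle argument of \cref{lem:Rf}, and that factor, combined with the spreading scale $r\sim|B|/k$, is what produces $k^2$ -- your accounting of the $k^2$ is therefore off even though the final exponent coincides. Finally, note the paper's Rayleigh quotient uses the unweighted boundary norm $\sum_{x\in B}f(x)^2$, not the degree-weighted denominator you wrote.
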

    Our result also implies Kelner, Lee, Price and Teng's result~\cite{GAFA} if we choose $B=V$.
	We remark that the $(\mathrm{log}~g)^2$ factor in~\eqref{eq:sigma_k} comes from a certain geometric decomposability property of genus $g$ graphs (see~\cref{thm:genus-g-alpha-bound}) and is most likely non-essential. As a discrete analogue of Karpukhin's bound~\cite[Theorem 1.4]{karpukhin2017BoundsLaplaceSteklov} for a compact orientable Riemann surface $\Omega$ of genus $g$ with boundary $\partial \Omega$ which is shown as 
	\begin{align*}
		\sigma_k \leq \frac{2\pi(g + b + k - 2)}{\card{\partial \Omega}},
	\end{align*}
	where $b$ is the number of boundary components, it is a natural problem whether  $(\mathrm{log}~g)^2$ could be removed and $gk$ could be replaced by $g+k$.

\section{Preliminary}\label{section2}
\subsection{Steklov eigenvalues on graphs}
Let $G = (V, E)$ be an undirected simple graph, where $V, E$ are respectively the set of vertices and edges of $G$. 
A graph with boundary is a pair $(G, B)$, where $B \subset V$ is a non-empty set. 
The set $B$ is called the boundary and $\Omega \coloneqq V \setminus B$ is called  the interior. 
We denote by $E(\Omega_1, \Omega_2) = \set{\set{x,y} \in E \mid x \in \Omega_1, y \in \Omega_2}$ for $\Omega_1, \Omega_2 \subseteq V$. 
Notice that the notion of graphs with boundary here is more general than the definition in~\cite{perrin_lower_2019} since the condition $E(B, B) = \emptyset$ is not required and the boundary vertices can be not adjacent to any interior vertices.
Steklov eigenvalues of such more general graphs with
boundary were considered in~\cite{Colbois2014spectralgap,Colbois2018discretizations,Yu2024Monotonicity}.
In this paper, we assume that $|B| \geq 2$ and $G$ is connected.
	 
     Denote the finite dimensional real vector space of functions on any $\Omega$ by $\mathbb{R}^\Omega\coloneqq\{f:\Omega\rightarrow\mathbb{R}\}$ with  scalar product
$$\langle f,g\rangle_\Omega=\sum\limits_{x\in\Omega}f(x)g(x),\quad \forall f,g\in \mathbb{R}^\Omega.$$
	For $\Omega=V$, we omit the subscript of the scalar product.
    
    We consider the Steklov problem on the pair $(G, B)$. 
	For a real function $f \in \RR^V$ on $V$, the discrete Laplacian operator $\Delta$ can be defined as
	\begin{align*}
		(\Delta f)(x) \coloneqq \sum_{\{x,y\}\in E} (f(x) - f(y)),\quad x\in V,
	\end{align*}
    which is a self-adjoint and positive semi-definite operator.  
    The Steklov eigenvalue problem on graphs can be viewed as the following equations
	\begin{equation*}
		\begin{cases}
			\Delta f(x)=0, & x\in \Omega, \\ 
			 \frac{\partial f}{\partial n} (x)=\sigma f(x), & x\in B ,
		\end{cases}
	\end{equation*}
	where $\frac{\partial f}{\partial n}(x) \coloneqq \sum\limits_{\{x,y\}\in E} (f(x) - f(y))=-\Delta f(x)$ for $x \in B$ and $y\in V$. 
    The reason to define $\frac{\partial f}{\partial n}$ is based on discrete Green’s formula.
    When $B=V$, $\sigma$ is the Laplacian eigenvalue of $G$.
    Here $\sigma \in \mathbb{R}$ is called the Steklov eigenvalue of the graph with boundary $B$. 
	For any function $0 \neq f\in \RR^{V}$, the Rayleigh quotient of $f$ is defined as
	\begin{align*}
		R_{(G,B)}(f) \coloneqq \frac{\langle f,\Delta f\rangle}{\langle f,f\rangle_B}=\frac{\sum\limits_{\{x,y\} \in E} (f(x)-f(y))^2 }{\sum\limits_{x \in B} f^2(x)},
	\end{align*}
	where the right-hand side is understood as $+\infty$ if $f|_{B}=0$.
	The variational characterization of the Steklov eigenvalues is given as
	\begin{align*}
		\sigma_k(G,B) &= \min_{W \subset \RR^V, \dim W = k} \max_{f \in W} R_{(G,B)},
	\end{align*}
	where $1\leq k\leq |B|$. 
    
The following lemma is useful to obtain the upper bounds of higher Steklov eigenvalues.
    
	\begin{lemma}\label{lem:Rf}
		Let $G$ be a graph with boundary $B$. For any $k\geq 2$, suppose that $v_1,v_2,\ldots,v_k\in \mathbb{R}^V$ is a collection of vectors such that $\mathrm{supp}(v_i)\cap B\neq \emptyset$ for any $i$ and  $\mathrm{supp}(v_i)\cap\mathrm{supp}(v_j)=\emptyset$ for all $1\leq i<j\leq k$. Then
		\begin{equation*}
			\sigma_k(G,B)\leq 2\max\limits_{1\leq i\leq k}\{R_{(G,B)}(v_i)\}.
		\end{equation*}
	\end{lemma}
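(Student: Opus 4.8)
The plan is to combine \cref{lem:1/k} with the variational (min--max) characterization of the Steklov eigenvalues. The key point is that the vectors $v_1,\dots,v_k$ have pairwise disjoint supports, hence they are mutually orthogonal in $\RR^V$ and also orthogonal with respect to the (degenerate) bilinear form $\langle\,\cdot\,,\widetilde{\,\cdot\,}\rangle$ given by $\langle f,g\rangle_B=\sum_{x\in B}f(x)g(x)$; moreover each $v_i$ restricts to a nonzero function on $B$ since $\mathrm{supp}(v_i)\cap B\neq\emptyset$, so $\langle v_i,v_i\rangle_B>0$ and each Rayleigh quotient $R(v_i)$ is finite.

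First I would argue by contradiction: suppose $\sigma_k/k > \max_{1\le i\le k} R(v_i)$, i.e. $R(v_i) < \sigma_k/k$ for every $i$. By the min--max formula, $\sigma_k=\min_{\dim W=k}\max_{f\in W}R(f)$, so it suffices to exhibit a single $k$-dimensional subspace on which the Rayleigh quotient stays below $\sigma_k$, contradicting minimality. The natural candidate is $W_0=\mathrm{span}\{v_1,\dots,v_k\}$, which is genuinely $k$-dimensional by disjointness of supports. So the crux is to bound $R(f)$ for an arbitrary $f=\sum_i c_i v_i\in W_0$. Because the supports are disjoint, the Dirichlet energy splits as $\langle f,\Delta f\rangle=\sum_i c_i^2\langle v_i,\Delta v_i\rangle$ plus cross terms $\langle v_i,\Delta v_j\rangle$ for $i\ne j$ — and here is where care is needed, since $\Delta v_i$ need not be supported on $\mathrm{supp}(v_i)$ (it can spill onto neighboring vertices). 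The honest route is instead to invoke \cref{lem:1/k} directly.

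The cleanest execution is: apply \cref{lem:1/k} in $\RR^{|B|}$ to the vectors $u_i\coloneqq v_i|_B$ (the restrictions to the boundary), which are pairwise orthogonal and nonzero. For any $(k-1)$-dimensional subspace $W'\subseteq\RR^{|B|}$ there is an index $i_0$ with $\sin^2\theta_{i_0}\ge 1/k$, where $\theta_{i_0}$ is the angle between $u_{i_0}$ and $W'$. Now, given any $(k-1)$-dimensional $W\subseteq\RR^V$, one feeds $W'$ = image of $W$ under restriction to $B$ (or rather one works with the linear map $f\mapsto f|_B$); the vector $f_0\coloneqq v_{i_0}-(\text{component of }v_{i_0}\text{ along }W)$ is nonzero in $W^\perp$-type position, its boundary norm satisfies $\langle f_0,f_0\rangle_B\ge \sin^2\theta_{i_0}\,\langle v_{i_0},v_{i_0}\rangle_B\ge \frac1k\langle v_{i_0},v_{i_0}\rangle_B$, while adding a $W$-component can only be arranged not to increase Dirichlet energy beyond a controlled amount — more precisely one shows the min--max minimizing $(k-1)$-subspace $W_{k-1}$ for $\sigma_k$ (so that $\sigma_k=\max_{f\perp W_{k-1}}R(f)$ in the appropriate sense) satisfies: there exists $i_0$ with a vector $g\in W_{k-1}^{\perp_B}$ close to $v_{i_0}$ having $R(g)\ge\sigma_k$ yet $R(g)\le k\,R(v_{i_0})$, giving $\sigma_k\le k\,\max_i R(v_i)$.

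The main obstacle I anticipate is bookkeeping the Dirichlet energy under the orthogonal projection: one must verify that replacing $v_{i_0}$ by its component orthogonal to $W_{k-1}$ does not blow up the numerator $\langle f,\Delta f\rangle$ relative to the denominator $\langle f,f\rangle_B$ by more than the factor $k$ already lost in the denominator via \cref{lem:1/k}. A clean way around this is to use the dual min--max form $\sigma_k=\max_{f}\min\{R(f): f\perp_B W_{k-1}\}$ together with the observation that if we only project out the $W_{k-1}$-directions within $\mathrm{span}\{v_1,\dots,v_k\}$, the disjoint-support structure keeps the energy additive and the cross terms vanish on the boundary form; then $R$ of the projected vector is a ratio of a sum of $c_i^2\langle v_i,\Delta v_i\rangle$ over a sum of $c_i^2\langle v_i,v_i\rangle_B$ with the latter dominated below by $\frac1k$ times a single term, yielding the bound. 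I would write it using \cref{lem:1/k} as a black box applied to $(v_1|_B,\dots,v_k|_B)$ and the subspace $W_{k-1}\cap(\cdots)$, and conclude $\sigma_k\le k\max_i R(v_i)$.
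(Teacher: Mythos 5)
Your instinct to apply \cref{lem:1/k} to the boundary restrictions $v_i|_B$ is the right starting point, but neither of your two routes actually closes the argument. In your main route you take an arbitrary $(k-1)$-dimensional $W$, project $v_{i_0}$ to obtain some $g$ with controlled boundary norm, and then need both $R(g)\geq\sigma_k$ and $R(g)\leq k\,R(v_{i_0})$; the second inequality is exactly the step you flag as "the main obstacle," and you never supply it --- projecting $v_{i_0}$ changes the Dirichlet energy in a way that the boundary-angle bound from \cref{lem:1/k} does not control. In your "clean way around," the assertion that "the disjoint-support structure keeps the energy additive" is false, and it contradicts your own earlier (correct) observation: if there is an edge $\{x,y\}$ with $x\in\mathrm{supp}(v_i)$, $y\in\mathrm{supp}(v_j)$, $i\neq j$, then $\langle v_i,\Delta v_j\rangle$ picks up the nonzero term $-v_i(x)v_j(y)$, so for $g=\sum_i c_iv_i$ the numerator $\langle g,\Delta g\rangle$ is not $\sum_i c_i^2\langle v_i,\Delta v_i\rangle$; only the boundary norm splits. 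Moreover the way the factor $1/k$ is supposed to enter that computation does not parse: if the energy really were additive, you would get $R(g)\leq\max_iR(v_i)$ with no $k$ at all, so the role of \cref{lem:1/k} in that route is unexplained. You also invoke a max--min form "$\sigma_k=\max_f\min\{R(f):f\perp_B W_{k-1}\}$" without proof; the fact you actually need (that $f|_B\perp f_1|_B,\dots,f_{k-1}|_B$ forces $R(f)\geq\sigma_k$) is equivalent to the spectral expansion you are trying to avoid writing down.

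The paper's proof sidesteps all of this by never modifying $v_{i_0}$: it takes $W'=\mathrm{span}\{f_1|_B,\dots,f_{k-1}|_B\}$ (boundary traces of the first $k-1$ Steklov eigenfunctions), gets $i_0$ with $\sin^2\theta\geq 1/k$ from \cref{lem:1/k}, writes $v_{i_0}=\sum_{j\leq|B|}c_jf_j+\delta$ with $\delta|_B=0$, and uses $\Delta f_j=\sigma_j\widetilde{f_j}$ together with $\langle\delta,\Delta\delta\rangle\geq0$ and $\sigma_j\geq 0$ to conclude directly that $R(v_{i_0})\geq\sigma_k\sin^2\theta\geq\sigma_k/k$; no projection and no energy comparison between different test vectors is needed. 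For what it is worth, your second idea can be repaired: choosing $g=\sum_ic_iv_i$ with $g|_B\perp f_j|_B$ for $j<k$ and estimating the cross-support edges by $(a-b)^2\leq 2a^2+2b^2$ gives $\langle g,\Delta g\rangle\leq 2\sum_ic_i^2\langle v_i,\Delta v_i\rangle$ and hence even $\sigma_k\leq 2\max_iR(v_i)$ --- but that requires the factor $2$, not additivity, and it still needs the eigenbasis fact you left unproved; as written, your proposal has a genuine gap.
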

	\begin{proof}
        By the definition, $W=\mathrm{span}\{v_1,\ldots,v_k\}$ is a $k$-dimensional subspace. For any $f=\sum_{i=1}^kc_iv_i\in W$ and $x,y\in V$, we claim that 
        \begin{align}\label{eq:Rf1}
            (f(x)-f(y))^2\leq 2\sum_{i=1}^kc_i^2(v_i(x)-v_i(y))^2.
        \end{align}
        If $f(x)=0$ or $f(y)=0$, there is nothing to prove. Then since $v_i$'s have disjoint supports, we can suppose that $p$ (resp. $q$) is the only index satisfying that $v_p(x)\neq 0$ (resp. $v_q(x)\neq 0$). In the case $p\neq q$,
        we have 
        \begin{align*}
            (f(x)-f(y))^2=&(c_pv_p(x)-c_qv_q(y))^2\\
            \leq& 2(c_p^2v_p(x)^2+c_q^2v_q(x)^2)\\
            \leq& 2\sum_{i=1}^kc_i^2(v_i(x)-v_i(y))^2.
        \end{align*}
        The case $p=q$ is direct. 
        Hence by \eqref{eq:Rf1}, we get
        \begin{align*}
            R_{(G,B)}(f)=&\frac{\sum\limits_{\{x,y\} \in E} (f(x)-f(y))^2 }{\sum\limits_{x \in B} f^2(x)}\\
            \leq& \frac{2\sum\limits_{i=i}^kc_i^2\sum\limits_{\{x,y\} \in E} (v_i(x)-v_i(y))^2 }{\sum\limits_{i=i}^kc_i^2\sum\limits_{x \in B} v_i^2(x)}\\
            \leq& 2\max\limits_{1\leq i\leq k}\{R_{(G,B)}(v_i)\}.
        \end{align*}
		Therefore, $\sigma_k(G,B)\leq 2\max\limits_{1\leq i\leq k}\{R_{(G,B)}(v_i)\}$.
	\end{proof}

    \subsection{Flows and congestion}
    Let $G = (V,E)$ be a finite graph.
	For every pair $u,v \in V$, let $\cP_{uv}$ be the set of paths between $u$ and $v$ in $G$. 
	Denote $\cP = \bigcup_{u,v\in V} \cP_{uv}$. 
	A  \textit{flow} in $G$ is a map $F : \mathcal{P} \longrightarrow \mathbb{R}_{\geq 0}$. For any $u,v\in V$, let $F[u,v]=\sum_{p\in\cP_{uv}}F(p)$ be the amount of flow sent between $u$ and $v$. 
	
	In this paper, we  consider a class of flows called \textit{probability flows}. Let $\mu$ be a probability distribution on subsets of $V$. $F$ is called a $\mu$\textit{-flow} if  $$F[u,v]=\mathbb{P}_{S\sim\mu}[u,v\in S]$$
    for all $u,v\in V$. Here the notation $S\sim\mu$ denotes that the subset $S$ is chosen according to the distribution $\mu$. For the boundary $B\subseteq V$ and $r\leq |B|$, denote the set of all $\mu$-flows with $\mathrm{supp}(\mu)\subseteq \binom{B}{r}$ by $\mathcal{F}_r(G,B)$.

	We define the \textit{congestion} of a flow $F$ by $\mathrm{con}(F) = {\sum_{v \in V} C_F(v)^2}$, where $C_F(v) = \sum_{p \in \cP : v \in p} F(p)$. This congestion can also be written as 
	\begin{align*}
		\mathrm{con}(F)=\sum_{p,p'\in\cP}\sum_{v\in p\cap p'}F(p)F(p').
	\end{align*}
The \textit{intersection number} is defined as
	\begin{align*}
		\mathrm{inter}(F)=\sum_{\substack{u,v,u',v'\\|\{u,v,u',v'\}|=4}}\sum_{\substack{p\in \cP_{uv}\\p'\in \cP_{u'v'}}}\sum_{x\in p\cap p'}F(p)F(p').
	\end{align*}
It is direct to check that $\con(F)\geq\inter(F)$.

An edge-weighted graph $H$ is a graph equipped with a nonnegative weight function $\omega: E(H)\rightarrow \mathbb{R}_{\geq 0}$ on edges.  If $\omega(e)\in\{0,1\}$ for every $e\in E(H)$, we call $H$  \textit{unit edge-weighted}.  We say that a flow $F$ in $G$  is an $H$\textit{-flow} if there exists an injective mapping $\phi:V(H)\rightarrow V(G)$ such that $F[\phi(u),\phi(v)]\geq\omega(u,v)$ for all $\{u,v\}\in E(H)$. In this case, $H$ and $G$ are respectively called the \textit{demand graph} and  the \textit{host graph}.  

Finally, with the definition of demand graph and host graph, we introduce $G$-intersection number.
\begin{definition}
		Let $G$ be an arbitrary host graph, and $H$ an edge-weighted demand graph. 
		The $G$-intersection number of $H$ is defined as
		\begin{align*}
			\mathrm{inter}_G(H)=\min_{\text{$F$ an $H$-flow in $G$}}\mathrm{inter}(F).
		\end{align*}
	\end{definition}

\subsection{Padded decomposition}
     Our proof is based on a graph semi-metric, which makes graph $G$ a semi-metric space defined as follows.
	\begin{definition}[Semi-metric space]
		Let $X$ be a nonempty set. 
		If the function $d:X \times X \rightarrow \mathbb{R}$ satisfies that 
		\begin{enumerate}
				\item $d(x, y) \geq 0$;
				\item $d(x, y)=d(y, x)$;
				\item $d(x, y) \leq d(x, z)+d(z, y)$,
		\end{enumerate}
		for any $x,y,z \in X$, 
		then we call $(X,d)$  a semi-metric space.
	\end{definition}
	
	\begin{definition}[Graph semi-metric]
		Let $G=(V,E)$ be a finite graph and $\omega : V\longrightarrow \mathbb{R}_{\geq 0}$ a nonnegative vertex weight. Then $\omega$ induces a finite semi-metric space via the function $d_{\omega} :V \times V \longrightarrow \mathbb{R}_{\geq 0}$, where $d_{\omega}(u, v) = min_{p \in \mathcal{P}_{uv}} \sum_{v \in p} \omega(v)$. We denote this  space by $(V,d_{\omega})$.
	\end{definition}
We also define the \textit{diameter} of $G$ in $(V,d_{\omega})$ as $diam(G)\coloneqq\max\limits_{u,v\in V(G)}d_\omega(u,v)$. 
	Next, we recall the  definitions of the padded decomposition and the modulus of padded decomposition.
	
	\begin{definition}[Padded decomposition] 
		Let $(X, d)$ be a finite metric space.  
		If $P$ is a partition of $X$, we will also regard it as a function $P: X \rightarrow 2^{X}$ such that for every $x \in X$, $P(x)$ is the unique part $C \in P$ for which $x \in C$.
		Let $\nu$ be a probability distribution over partitions of $X$, and $P$ a random partition distributed according to $\nu$. 
		We say that $P$ is $\kappa$-bounded if it holds that for every $C \in P$, we have $diam(C) \leq \kappa$.
		We say that $P$ is $(\alpha, \delta)$-padded if
		\begin{align*}
			\mathbb{P}[B(x, \kappa/\alpha) \subseteq P(x)] \geq \delta
		\end{align*}
		for every $x \in X$. 
		Here $B(x, r) \coloneqq \set{y \in X : d(x, y) \leq r}$. 
	\end{definition}
	
	\begin{definition}[The modulus of padded decomposibility]
		The modulus $\alpha(X, d; \delta, \kappa)$ of padded decomposibility of a finite metric space $(X, d)$ is defined as 
		\begin{align*}
			\alpha(X, d; \delta, \kappa) &= \inf \set{\alpha: 
				(X, d) \text{ admits a } \kappa\text{-bounded } (\alpha, \delta)\text{-padded random partition}}. 
		\end{align*}
	\end{definition}

Here we give some notations for convenience.  Given two expressions $A$ and $B$, we write $A = \bigO_{p}(B)$ to express that $A \leq C_p B$ for some constant $C_p > 0$ which is independent of the variables in $B$ but may depend on $p$. 
	If the constant $C_p$ is universal, then we omit $p$ in the notation, namely we write $A = \bigO(B)$. 
	We also write $A \lesssim B$ as a synonym for $A=\bigO(B)$.

    Now we state two important theorems about the modulus of padded decomposibility in~\cite{biswal2010eigenvalue} and~\cite{lee2010genus}.
	
	\begin{theorem}[{\cite[Corollary 4.2]{biswal2010eigenvalue}}]\label{thm:minor-free-alpha-bound}
		Let $G=(V,E)$ be a $K_r$-minor-free graph. 
		Then for every $\kappa>0$ and nonnegative vertex weight $\omega$, $\alpha(V, d_{\omega}; \frac{1}{2}, \kappa)=\mathcal{O}(r^2)$. 
	\end{theorem}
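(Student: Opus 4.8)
The plan is to follow the Klein--Plotkin--Rao hierarchical ball-carving construction, adapted to the vertex-weighted semi-metric $d_\omega$. Fix $\kappa>0$, set $t\coloneqq r-1$, and fix a width parameter $\tau\coloneqq c_0\kappa/r$ for a suitably small absolute constant $c_0$. I would build the random partition $P$ in $t$ rounds. Put $\mathcal C_0\coloneqq\{V\}$. In round $i=1,\dots,t$, draw an independent uniform offset $\beta_i\in[0,\tau)$ and refine every cluster $C\in\mathcal C_{i-1}$ as follows: choose an arbitrary root $x_C\in C$; for each integer $j$ set $A^C_j\coloneqq\{u\in C:(j-1)\tau\le d^C_\omega(x_C,u)-\beta_i<j\tau\}$, where $d^C_\omega$ is the vertex-weighted shortest-path semi-metric of the (connected) induced subgraph $G[C]$; and let $\mathcal C_i$ consist of all connected components of all the sets $A^C_j$, as $C$ ranges over $\mathcal C_{i-1}$ and $j$ over $\mathbb Z$. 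Output $P\coloneqq\mathcal C_t$. Every cluster produced in this way is connected, so the recursion is well defined and $P$ is a random partition of $V$.

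Two things must then be shown: (i) every cluster of $P$ has diameter at most $\kappa$ in $(V,d_\omega)$, so that $P$ is $\kappa$-bounded; and (ii) for every $v\in V$, writing $\rho\coloneqq\kappa/\alpha$ with $\alpha\coloneqq c_1r^2$, one has $\mathbb P[\,B(v,\rho)\subseteq P(v)\,]\ge\tfrac12$. Claim (ii) is the softer half and is driven entirely by the random offsets. The key observation is that for $u\in B(v,\rho)$ a $d_\omega$-shortest $v$--$u$ path lies inside $B(v,\rho)$; hence $B(v,\rho)$ is connected, and if $B(v,\rho)\subseteq C$ then this path lies in $G[C]$, so $d^C_\omega(v,u)\le d_\omega(v,u)\le\rho$. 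Condition on rounds $1,\dots,i-1$ and suppose $B(v,\rho)\subseteq C$ for the current cluster $C\ni v$; then every point of $B(v,\rho)$ has $d^C_\omega$-distance to $x_C$ in the interval $[\,d^C_\omega(x_C,v)-\rho,\ d^C_\omega(x_C,v)+\rho\,]$, which has width $2\rho$. This interval meets a band boundary $\beta_i+j\tau$ only if $\beta_i$ lands in a fixed sub-interval of $[0,\tau)$ of length $2\rho$, an event of probability $2\rho/\tau$ (and $2\rho<\tau$ once $c_1$ is large). Off that event $B(v,\rho)$ lies in a single set $A^C_j$, hence---being connected---in a single cluster of $\mathcal C_i$, so $B(v,\rho)\subseteq P(v)$ unless some round fails. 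A union bound then bounds the failure probability by $t\cdot 2\rho/\tau=(r-1)\cdot 2\rho r/(c_0\kappa)=\mathcal O(r^2\rho/\kappa)=\mathcal O(r^2/\alpha)$, which is $\le\tfrac12$ for $c_1$ large enough; this is precisely $\alpha(V,d_\omega;\tfrac12,\kappa)\le\alpha=\mathcal O(r^2)$.

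The substance of the argument---and the step I expect to be the main obstacle---is claim (i), where $K_r$-minor-freeness enters. A single round achieves nothing, because a band, though thin in the radial direction, can have arbitrarily large intrinsic diameter; one must instead show that after $t=r-1$ rounds the intrinsic ($d^C_\omega$-)diameter of every surviving cluster has fallen to $\mathcal O(r\tau)=\mathcal O(\kappa)$ (the weak diameter in $(V,d_\omega)$ is no larger, which is all $\kappa$-boundedness requires). The Klein--Plotkin--Rao proof of this is combinatorial: given a cluster $C\in\mathcal C_t$ and a $d^C_\omega$-geodesic $\gamma$ joining two of its points, trace $C$ back along the nested chain $C=C_t\subseteq C_{t-1}\subseteq\cdots\subseteq C_0=V$ together with the roots $x_{C_0},\dots,x_{C_{t-1}}$ used in the successive carvings. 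Since $\gamma\subseteq C_t$, it is confined, at each of the $t$ rounds, to a single band of radial width $\tau$ about that round's root while retaining its (large) weight; the roots, the corresponding annuli, and suitably chosen arcs of $\gamma$ then assemble into $r$ pairwise-adjacent connected branch sets, i.e.\ a $K_r$-minor, contradicting the hypothesis. Turning this outline into a proof---selecting the branch sets, verifying their pairwise adjacency, getting the count to equal exactly $r$, and correctly handling the boundary vertices of the annuli in the vertex-weighted metric---is the genuinely delicate part; the cleanest route is to invoke the minor-extraction lemma of Klein--Plotkin--Rao, in the refined form of Fakcharoenphol--Talwar, essentially verbatim, as is done in~\cite{biswal2010eigenvalue}. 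Combining (i) and (ii) gives the stated bound.
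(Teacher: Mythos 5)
The paper itself contains no proof of this statement: it is imported wholesale from \cite{biswal2010eigenvalue} (Corollary 4.2 there), which in turn rests on the Klein--Plotkin--Rao chopping scheme with the Fakcharoenphol--Talwar refinement. Your proposal follows exactly that route, so there is no genuinely different method to compare; the only question is whether your reconstruction is self-contained. The soft half (the padding estimate) is correct and complete as written: a ball $B(v,\rho)$ contains $d_\omega$-shortest paths between $v$ and each of its points, so conditioned on having survived rounds $1,\dots,i-1$ it is cut in round $i$ with probability at most $2\rho/\tau$, and the union bound over $r-1$ rounds with $\tau=c_0\kappa/r$ and $\rho=\kappa/(c_1r^2)$ gives failure probability at most $2/(c_0c_1)\le\tfrac12$, i.e.\ padding $\mathcal{O}(r^2)$.

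The gap is that the entire substance of the theorem --- the step where $K_r$-minor-freeness is used, namely that after $r-1$ rounds of $\tau$-chopping every cluster has weak diameter $\mathcal{O}(r\tau)$ --- is not proved but invoked ``essentially verbatim'' from KPR/FT. That makes your write-up a citation plus bookkeeping, which is admittedly on par with what the paper does one level up, but if it is to stand as a proof three points need care. First, the quantitative form of the cited lemma matters: the original KPR analysis gives weak diameter $\mathcal{O}(r^2\tau)$, which with your parameter accounting only yields $\alpha=\mathcal{O}(r^3)$; the claimed $\mathcal{O}(r^2)$ requires the Fakcharoenphol--Talwar refinement, so state that lemma precisely rather than gesturing at it. Second, KPR/FT bound the \emph{weak} diameter of clusters in the ambient metric, not the intrinsic $d^C_\omega$-diameter that your sketch says ``must be shown''; the strong-diameter variant is a genuinely harder statement, but weak diameter is all that $\kappa$-boundedness requires, so the claim should be phrased that way. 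Third, KPR/FT are stated for edge-weighted shortest-path metrics, whereas $d_\omega$ is a vertex-weighted semi-metric (distinct vertices may be at distance $0$); either rerun the carving argument directly for $d_\omega$, as you began to do, or pass to edge weights $\omega(u)+\omega(v)$, which changes all distances by at most a factor of $2$ and is how \cite{biswal2010eigenvalue} reduce to the classical edge-weighted statement.
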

	
	\begin{theorem}[{\cite[Theorem 4.1]{lee2010genus}}]\label{thm:genus-g-alpha-bound}
		Let $G=(V,E)$ be a graph of orientable genus $g$. Then for every $\kappa>0$ and nonnegative vertex weight $\omega$, $\alpha(V, d_{\omega}; \frac{1}{8}, \kappa)=\mathcal{O}(\mathrm{log}~g)$. 
	\end{theorem}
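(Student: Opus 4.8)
We sketch the strategy we would follow to prove~\cref{thm:genus-g-alpha-bound}; since it is quoted from~\cite{lee2010genus}, this is only an outline. The bound $\mathcal{O}(\log g)$ must come from the surface embedding itself: a graph of orientable genus $g$ excludes $K_{\Theta(\sqrt{g})}$ as a minor, so feeding this into~\cref{thm:minor-free-alpha-bound} only gives $\alpha(V,d_\omega;\frac{1}{2},\kappa)=\mathcal{O}(g)$, far from the target. At the other extreme, for genus $0$ the graph is $K_5$-minor-free, so~\cref{thm:minor-free-alpha-bound} with $r=5$ already supplies the planar base case $\alpha(V,d_\omega;\frac{1}{2},\kappa)=\mathcal{O}(1)$; the task is therefore to pay only an extra $\mathcal{O}(\log g)$ in passing from a disk to $\Sigma_g$.

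The first step is structural. Fix a cellular embedding of $G$ in $\Sigma_g$ and take a minimum-weight \emph{cut graph} $K\subseteq G$: a connected subgraph whose complement $\Sigma_g\setminus K$ is an open disk, chosen so that each arc of $K$ (a maximal path between branch vertices) is a $d_\omega$-shortest path, and so that $K$ has only $\mathcal{O}(g)$ branch vertices and $\mathcal{O}(g)$ arcs. Cutting $G$ open along $K$ yields a planar graph embedded in a disk, from which $G$ is recovered by identifying $\mathcal{O}(g)$ pairs of boundary geodesics. Consequently, at any scale $\kappa$, the metric $d_\omega$ restricted to the vertices at distance $>3\kappa$ from $K$ agrees, between pairs within distance $\kappa$ of one another, with a planar metric, since a geodesic of length at most $\kappa$ joining two such vertices cannot touch $K$ and hence stays inside the disk.

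I would then build a single $\kappa$-bounded random partition using $\mathcal{O}(\log g)$ nested shells around $K$, each of $d_\omega$-width $\Theta(\kappa/\log g)$, so that together they span a band of width at most $\kappa$. Outside the outermost shell the local metric at scale $\kappa$ is planar, and we partition there by~\cref{thm:minor-free-alpha-bound}. Reading the shells from outside in, in shell $i$ we first cut out a random sub-buffer and then re-partition the resulting annular region at scale $\Theta(\kappa/\log g)$ — which is again planar at that local scale once the arcs of $K$ lying entirely behind shell $i$ have been contracted away — before passing inward; the decongestion should roughly halve the number of relevant arcs every constant number of shells, so $\mathcal{O}(\log g)$ shells exhaust $K$. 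Because a point of shell $i$ is padded by a planar partition run at scale $\Theta(\kappa/\log g)$, its padding radius is $\Theta(\kappa/\log g)$, which against the global diameter bound $\kappa$ is exactly $\alpha=\mathcal{O}(\log g)$; and since only $\mathcal{O}(1)$ of the random choices (a point's own shell's sub-buffer and the two adjacent shell walls) affect whether it is padded, the padding probability stays above an absolute constant, in particular at least $\frac{1}{8}$.

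The step I expect to be the main obstacle is the decongestion claim: that $\mathcal{O}(\log g)$ shells suffice to reduce the cut graph to something that is planar at scale $\kappa$. This is where the geometry of a minimum-weight cut graph on $\Sigma_g$ must genuinely be used — that it has only $\mathcal{O}(g)$ arcs, each a shortest path, and that the surface topology forces these arcs to be spread out, so that a ball of radius $\mathcal{O}(\kappa/\log g)$ meets only a controlled number of them rather than all $\mathcal{O}(g)$ — and it is the technical heart of~\cite{lee2010genus}. The planar input (\cref{thm:minor-free-alpha-bound} with $r=5$) and the individual random-buffer steps are, by contrast, standard region-growing arguments.
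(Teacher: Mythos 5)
The paper itself offers no proof of this statement: it is imported verbatim as \cite[Theorem 4.1]{lee2010genus} and used as a black box, so there is no internal argument to compare yours against; what matters is whether your sketch would actually establish the bound, and as written it would not. The central step — your ``decongestion claim,'' that $\mathcal{O}(\log g)$ shells around a fixed minimum-weight cut graph $K$ suffice because the topology forces the $\mathcal{O}(g)$ arcs of $K$ to be spread out, so that a ball of radius $\mathcal{O}(\kappa/\log g)$ meets only boundedly many of them — is both left unproven and, in the form stated, false in general: a cut graph can be (and in the canonical construction is) a system of $\Theta(g)$ loops through a single basepoint, and even a minimum-weight cut graph may have all of its arcs funnel through one small neighborhood, so no scale-$\kappa/\log g$ locality near $K$ can be extracted from minimality alone. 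This is precisely why the actual argument of \cite{lee2010genus} does not work with shells around one fixed cut graph; instead it builds a \emph{stochastic embedding} of the genus-$g$ metric into planar graph metrics with distortion $\mathcal{O}(\log g)$, obtained by an iterative randomized cutting in which each round cuts along freshly computed shortest systems of loops in the current (already cut-open) surface and reduces the genus by a constant factor, so that only $\mathcal{O}(\log g)$ rounds occur and each costs $\mathcal{O}(1)$ expected distortion; the padded-decomposition statement then follows by transferring the planar $\mathcal{O}(1)$-padded decomposition (your \cref{thm:minor-free-alpha-bound} with $r=5$) through that random planarization, which is also where the particular constant $\tfrac18$ in the padding probability comes from.

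Two smaller points: your probability accounting (``only $\mathcal{O}(1)$ random choices affect whether a point is padded, hence probability at least $\tfrac18$'') is asserted rather than argued, and in a nested-shell scheme a point near the inner shells is affected by all the outer cuts, so independence of the relevant events would itself need proof. The surrounding observations you make are fine — genus $g$ excludes $K_{\Theta(\sqrt g)}$ so \cref{thm:minor-free-alpha-bound} only yields $\mathcal{O}(g)$, and vertices at distance $>3\kappa$ from the cut locus see a planar metric at scale $\kappa$ — but they are the easy part; the theorem's content is exactly the step you deferred, so the proposal has a genuine gap rather than constituting an alternative proof.
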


    As we all know, any planar graph is $K_5$-minor-free, which implies that $\alpha(V, d_{\omega}; \frac{1}{2}, \kappa)=\mathcal{O}(1)$ for every $\kappa>0$ and nonnegative vertex weight $\omega$.
	\section{Proofs}
	
	\subsection{Spreading weights}
	Graphs we consider in our proof are all finite.

	\begin{definition}
		Let $G=(V,E)$ be a  graph with boundary $B$ and  semi-metric $d_\omega$.
		Say that $\omega$ is $(r,\epsilon,B)$-spreading if we have 
		\begin{equation*}
			\frac{1}{|S|^2}\sum_{u,v\in S}d_{\omega}(u,v)\geq \epsilon\sqrt{\sum_{v\in V}\omega(v)^2}.
		\end{equation*}
		for every $S\subseteq B$ with $|S|=r$. Write $\epsilon_r(G,B,\omega)$ for the maximal value of $\epsilon$ for which $\omega$ is $(r,\epsilon,B)$-spreading.
	\end{definition}
	
	First, we give an estimate of the Steklov eigenvalue by the modulus of padded decomposability and spreading weight.
	\begin{theorem}\label{thm:key}
		Let $G$ be a graph with boundary $B$ and  maximum degree $D$.
        Let $\sigma_k$ be the $k$-th Steklov eigenvalue. 
        Then for any $0<\delta<1$, $2\leq k\leq\delta\card{B}/4$ and any vertex weight $\omega: V\rightarrow \mathbb{R}_{\geq 0}$ with 
		\begin{equation*}
			\sum_{v\in V}\omega(v)^2=1,
		\end{equation*}
		we have $$\sigma_k\leq \frac{128}{\delta}\frac{D}{|B|}(\frac{\alpha}{\epsilon})^2,$$
		where $\epsilon=\epsilon_{\floor{\delta|B|/4k}}(G,B,\omega)$ and $\alpha=\alpha(V,d_{\omega};\delta,\frac{\epsilon}{2})$.
	\end{theorem}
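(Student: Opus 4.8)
The plan is to construct $k$ test functions with pairwise disjoint supports, each meeting $B$, all of Rayleigh quotient at most $\tfrac{64}{\delta}\tfrac{D}{|B|}(\alpha/\epsilon)^2$, and then to invoke \cref{lem:Rf}, which turns this into $\sigma_k/k \le \tfrac{64}{\delta}\tfrac{D}{|B|}(\alpha/\epsilon)^2$. To set up, write $r = \floor{\delta|B|/4k}$ (so $r \ge 1$ by the hypothesis $k \le \delta|B|/4$) and $\epsilon = \epsilon_r(G,B,\omega)$; if $\epsilon = 0$ the claimed bound is vacuous, so assume $\epsilon > 0$. Put $\kappa = \epsilon/2$ and $\alpha = \alpha(V,d_\omega;\delta,\kappa)$, and fix a $\kappa$-bounded $(\alpha,\delta)$-padded random partition $P$ of $(V,d_\omega)$ coming from the definition of $\alpha$ (if this infimum is not attained, one runs the argument with $\alpha + \eta$ in place of $\alpha$ and lets $\eta \to 0^+$ at the end). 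To each part $C$ in the support of $P$ I would attach a ``tent'' $\varphi_C\colon V \to [0,1]$ supported in $C$, equal to $1$ on $\set{v \in C : B(v,\epsilon/(2\alpha)) \subseteq C}$, and $\tfrac{2\alpha}{\epsilon}$-Lipschitz with respect to $d_\omega$; concretely $\varphi_C(v) = \min\set{\tfrac{2\alpha}{\epsilon}\,d_\omega(v,V\setminus C),\,1}$ for $v \in C$ and $\varphi_C(v) = 0$ otherwise works, the Lipschitz bound following from the triangle inequality for $d_\omega$. Two consequences I would use repeatedly: $(\varphi_C(u)-\varphi_C(v))^2 \le (\tfrac{2\alpha}{\epsilon})^2(\omega(u)+\omega(v))^2$ for every edge $\set{u,v}$ (for an edge leaving $C$, its outside endpoint lies in $V \setminus C$), and $\varphi_{P(v)}(v) = 1$ whenever $B(v,\epsilon/(2\alpha)) \subseteq P(v)$.

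The argument then rests on three observations. First, \emph{every part is boundary-light}: since $\operatorname{diam}(C) \le \kappa = \epsilon/2$, if $|C \cap B| \ge r$ then any $S \subseteq C \cap B$ with $|S| = r$ has average pairwise $d_\omega$-distance at most $\epsilon/2 < \epsilon = \epsilon\sqrt{\sum_v \omega(v)^2}$, contradicting that $\omega$ is $(r,\epsilon,B)$-spreading; hence $|C \cap B| \le r-1 < \delta|B|/4k$. Second, setting $b_C = \langle \varphi_C,\varphi_C\rangle_B$ and $e_C = \langle \varphi_C,\Delta\varphi_C\rangle$ (so $R(\varphi_C) = e_C/b_C$ when $b_C > 0$), one has $\sum_C b_C = \sum_{v \in B} \varphi_{P(v)}(v)^2 \ge |\set{v \in B : \varphi_{P(v)}(v) = 1}|$, whose expectation over $P$ is $\ge \delta|B|$ by the padding guarantee, so I may fix a realization of $P$ with $\sum_C b_C \ge \delta|B|$. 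Third, \emph{the total Dirichlet energy is small}: in $\sum_C e_C$ each intra-part edge is counted once and each cross-part edge twice, and in every case an edge $\set{u,v}$ contributes at most $(\tfrac{2\alpha}{\epsilon})^2(\omega(u)+\omega(v))^2$ to each $e_C$ it meets (for a cross edge, $\varphi_{P(u)}(u) \le \tfrac{2\alpha}{\epsilon}d_\omega(u,v)$), so $\sum_C e_C \le 2(\tfrac{2\alpha}{\epsilon})^2 \sum_{\set{u,v}\in E}(\omega(u)+\omega(v))^2 \le 4(\tfrac{2\alpha}{\epsilon})^2 \sum_{v\in V} \deg(v)\,\omega(v)^2 \le \tfrac{16\alpha^2 D}{\epsilon^2}$, using $\deg(v) \le D$ and $\sum_v \omega(v)^2 = 1$.

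Finally comes the selection step, which is the crux and the source of the \emph{linear} (not quadratic) dependence on $k$: I keep the tents individual rather than bundling them into groups. Let $T = \tfrac{64\alpha^2 D}{\delta\epsilon^2|B|}$ and call a part $C$ \emph{good} if $b_C > 0$ and $e_C/b_C \le T$. Bad parts satisfy $e_C > T b_C$, hence $\sum_{\text{bad }C} b_C < T^{-1}\sum_C e_C \le T^{-1}\cdot\tfrac{16\alpha^2 D}{\epsilon^2} = \tfrac{\delta|B|}{4}$; therefore $\sum_{\text{good }C} b_C > \delta|B| - \tfrac{\delta|B|}{4} = \tfrac{3\delta|B|}{4}$, and since each $b_C < \delta|B|/4k$ there are more than $3k \ge k$ good parts. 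Picking any $k$ of them, say $C_1,\dots,C_k$, the tents $\varphi_{C_1},\dots,\varphi_{C_k}$ have pairwise disjoint supports and each meets $B$ (as $b_{C_i} > 0$), so \cref{lem:Rf} gives $\sigma_k/k \le \max_i R(\varphi_{C_i}) \le T$, i.e. $\sigma_k \le \tfrac{64k}{\delta}\tfrac{D}{|B|}(\alpha/\epsilon)^2$.

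The only genuinely fiddly parts will be bookkeeping rather than ideas: making the tent $\varphi_C$ interact correctly with the pseudo-semi-metric $d_\omega$ (which need not have $d_\omega(v,v) = 0$) so that it is at once supported in $C$, equal to $1$ on the padded core, and $\tfrac{2\alpha}{\epsilon}$-Lipschitz; and tracking constants, where there is slack (any constant at least $\tfrac{64}{3}$ in place of $64$ would close the argument, so the precise value is not delicate). The conceptual content — boundary-lightness of parts from the spreading hypothesis, smallness of the total energy from the padded partition, and converting ``good total boundary mass'' into ``many good parts'' via the per-part cap $b_C < \delta|B|/4k$ — is routine once the construction is fixed.
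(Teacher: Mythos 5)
Your proof is correct, and it shares the paper's overall skeleton — a realization of the padded decomposition carrying at least $\delta|B|$ of padded boundary mass, the observation that the spreading hypothesis together with the $\epsilon/2$ diameter bound forces each part to contain fewer than $\delta|B|/4k$ boundary vertices, distance-based Lipschitz bump functions with disjoint supports whose energy is controlled by $D$, and finally \cref{lem:Rf} — but the extraction of the $k$ test functions is done differently. The paper merges the padded cores $\hat{C}_i$ into $2k$ balanced sets $S_1,\dots,S_{2k}$ of size between $\delta|B|/4k$ and $\delta|B|/2k$, takes the $\epsilon/(2\alpha)$-neighborhoods (whose pairwise disjointness must then be checked), and selects the $k$ neighborhoods of smallest energy by pigeonhole, so that each chosen function separately has energy at most $4D/k$ and boundary mass at least $(\epsilon/2\alpha)^2\delta|B|/4k$. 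You instead keep one tent per part and run a Markov-type argument on the aggregate quantities: total boundary mass at least $\delta|B|$, total energy at most $16\alpha^2D/\epsilon^2$, so parts with Rayleigh quotient above the target threshold carry less than $\delta|B|/4$ of the mass, and the per-part cap $b_C<\delta|B|/4k$ converts the remaining mass into more than $3k$ good parts. Your route avoids both the grouping/balancing step and the neighborhood-disjointness verification (disjointness is immediate since each tent lives inside its own part), and it leaves slack in the constant (you only need $k$ of the $>3k$ good parts); the paper's grouping, on the other hand, gives each individual test function a guaranteed boundary mass, which is closer in spirit to the constructions it generalizes. You are also slightly more careful than the paper on two boundary conventions (the vacuous case $\epsilon=0$, and non-attainment of the infimum defining $\alpha$, handled by passing to $\alpha+\eta$), neither of which affects the substance. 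Both arguments land on the same constant $64$, so the proposal is a valid, mildly streamlined alternative proof of \cref{thm:key}.
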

	\begin{proof}
		By the definition of $\alpha$, there exists a $\epsilon/2$-bounded random partition $P$ chosen according to $\nu$ such that 
		\begin{align*}
			\mathbb{P}[B(x, \epsilon/(2\alpha)) \subseteq P(x)] \geq \delta
		\end{align*}
		for every $x \in V$. 
		Define 
		\begin{align*}
			B_{i} \coloneqq \set{x \in B : B(x, \epsilon/(2\alpha)) \subseteq P_i(x)}
		\end{align*}
		and $a_i = \mathbb{P}(P = P_i)$. 
		We have 
		\begin{align*}
			\sum_{x\in B} \mathbb{P}[B(x, \epsilon/(2\alpha)) \subseteq P(x)] & =\sum_{x\in B}\sum_ia_i\mathbbm{1}_{\{B(x, \epsilon/(2\alpha))\subseteq P_i(x)\}}\\
			&=\sum_ia_i\sum_{x\in B}\mathbbm{1}_{\{B(x, \epsilon/(2\alpha))\subseteq P_i(x)\}}\\
			&=\sum_ia_i|B_i|\geq\delta\card{B}.
		\end{align*}
		Therefore, there exists a $\epsilon/2$-bounded partition $C$ such that the set
		\begin{align*}
			\hat{C}=\{x\in B: B(x, \epsilon/(2\alpha)) \subseteq C(x)\}
		\end{align*}
		satisfies $|\hat{C}|\geq\delta|B|$.
		Let $C=\{C_1, C_2,\cdots,C_m\}$, and for each $i\in [m]$, we define
		\begin{equation*}
			\hat{C}_i=\{x\in C_i\cap B: B(x,\epsilon/(2\alpha))\subseteq C_i\}.
		\end{equation*}
Note that some $\hat{C}_i$ may be empty.  We exclude those empty parts and relabel all the subscripts. 
		Then  we get
		\begin{equation*}
|\hat{C}_1\cup\hat{C}_2\cup\cdots\cup\hat{C}_{m'}|=|\hat{C}|\geq \delta |B|.
		\end{equation*}
		for some $m'\leq m$.
		
		For any set $A\subseteq B$ with $diam(A)\leq \epsilon/2$, we have 
		\begin{equation}\label{eq:1/(A^2)sum_dist}
			\frac{1}{|A|^2}\sum_{u,v\in A}d_{\omega}(u,v)\leq \frac{\epsilon}{2}=\frac{\epsilon}{2}\sqrt{\sum_{v\in V}\omega(v)^2}.
		\end{equation}
		        
        If $|\hat{C}_i|>\delta|B|/4k$, then we could pass to a subset of $\hat{C}_i$ of size exactly $\floor{\delta|B|/4k}$ which satisfies \eqref{eq:1/(A^2)sum_dist} since $diam(\hat{C}_i)\leq \epsilon/2$. However, this would lead to a contradiction with the definition of $\epsilon$, and then  we have $|\hat{C}_i|\leq \delta|B|/4k$ for each $i=1,2,\ldots,m'$. Therefore, by taking disjoint unions of the sets $\{\hat{C}_i\}$, we can find $2k$ sets $S_1,S_2,\ldots,S_{2k}$ with 
		\begin{equation*}
			\frac{\delta|B|}{2k}\geq |S_i|\geq \frac{\delta|B|}{4k}\geq 1.
		\end{equation*}
		For each $i\in [2k]$, let $\tilde{S}_i$ be the $\epsilon/(2\alpha)$-neighborhood of $S_i$. Observe  that the sets $\{\tilde{S}_i\}$ are pairwise disjoint, since by construction each is contained in a union of $C_i$'s, which are themselves pairwise disjoint.
		
		Now, for every $i\in [2k]$ we define
		\begin{equation*}
			W(\tilde{S}_i)\coloneqq\sum_{u\in \tilde{S}_i}\sum_{v:uv\in E}\left[\omega(u)+\omega(v)\right]^2.
		\end{equation*}
		Clearly, we have 
		\begin{equation*}
			\sum_{i=1}^{2k}W(\tilde{S}_i)\leq 2\sum_{uv\in E}\left[\omega(u)+\omega(v)\right]^2\leq 4D\sum_{v\in V}\omega(v)^2=4D.
		\end{equation*}
		 By renumbering the sets so that $\{\tilde{S}_1,\tilde{S}_2,\ldots,\tilde{S}_k\}$ have the smallest $W(\tilde{S}_i)$ values,   we have $W(\tilde{S}_i)\leq 4D/k$ for each $i=1,2,\ldots,k$.
		
		Finally, we define functions $f_1,f_2,\ldots,f_k:V\rightarrow\mathbb{R}$ by 
		\begin{equation*}
			f_i(x)=\mathrm{max}\left\{0,\frac{\epsilon}{2\alpha}-d_{\omega}(x,S_i)\right\}
		\end{equation*}
		so that $f_i$ is supported on $\tilde{S}_i$, and $f_i(x)=\epsilon/2\alpha$ for $x\in S_i\subseteq B$.
		Thus, these functions have disjoint support.
        Since each $f_i$ is $1$-Lipschitz and satisfies $\mathrm{supp}(f_i)\subseteq \tilde{S}_i$, we have 
		\begin{equation*}
			\begin{aligned}
				\sum_{uv\in E}|f_i(u)-f_i(v)|^2=&\sum_{u\in \tilde{S}_i}\sum_{v:\{u,v\}\in E}|f_i(u)-f_i(v)|^2\\
				\leq&\sum_{u\in \tilde{S}_i}\sum_{v:\{u,v\}\in E}d_{\omega}(u,v)^2\\
				=&\sum_{u\in \tilde{S}_i}\sum_{v:\{u,v\}\in E}\left[\omega(u)+\omega(v)\right]^2\\
				=&W(\tilde{S}_i)\leq \frac{4D}{k}.
			\end{aligned}
		\end{equation*}
		Furthermore, the functions satisfy
		\begin{equation*}
			\sum_{x\in B}f_i(x)^2\geq (\frac{\epsilon}{2\alpha})^2|S_i|\geq \frac{\epsilon^2}{16\alpha^2}\frac{\delta|B|}{k}.
		\end{equation*}

		Combining with two estimates above, we obtain
		\begin{equation*}
			R(f_i)\leq \frac{64}{\delta}\frac{D}{|B|}(\frac{\alpha}{\epsilon})^2
		\end{equation*}
		for each $f_i$, and the proof is complete due to~\cref{lem:Rf}.
	\end{proof}
	Next, we show a duality between the optimization problem of finding a spreading weight $\omega$ and the problem of minimizing congestion in probability flows. We call an optimization problem convex if it has the following form:
    \begin{align*}
			\text{minimize}~~~~~~~ &f_0(x)\\
			\text{subject to}~~~~~~&f_i(x)\leq b_i,\quad i=1,\ldots,m,
		\end{align*}
        where all the functions $f_i:\mathbb{R}^n\rightarrow\mathbb{R}$ ($i=0,\ldots,m$) are convex, which means $f_i(ax+by)\leq af_i(x)+bf_i(y)$ for all $x,y\in\mathbb{R}^n$ and all $a,b\in\mathbb{R}$ with $a+b=1$, $a\geq 0$, $b\geq 0$.
        
    Before our proof, we recall  Slater's condition in convex optimization, which can be found in Chapter 5 of \cite{boyd2004convex}.
    \begin{theorem}[Slater's condition]\label{Slater's condition}
        	When the feasible region for a convex problem $(\boldsymbol{P})$ has non-empty interior, the strong duality holds, which means the value of $(\boldsymbol{P})$ is equal to its dual $(\boldsymbol{P^*})$.
    \end{theorem}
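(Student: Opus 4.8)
This is the classical strong-duality theorem for convex programs, so the proposal is to recall the standard supporting-hyperplane argument, written in the normal form used in the excerpt: minimize $f_0(x)$ subject to $f_i(x)\le b_i$, $i=1,\dots,m$, with optimal value $p^*$, Lagrange dual function $g(\lambda)=\inf_{x}\!\big(f_0(x)+\sum_{i=1}^m\lambda_i(f_i(x)-b_i)\big)$, and dual $(\boldsymbol{P^*})$ the maximization of $g(\lambda)$ over $\lambda$ in the nonnegative orthant, with value $d^*$. Weak duality $d^*\le p^*$ is immediate and unconditional, so the entire content is the reverse inequality $p^*\le d^*$; the hypothesis that the feasible region has non-empty interior will be used exactly once, in the equivalent form that there is a strictly feasible point $\bar x$ with $f_i(\bar x)<b_i$ for every $i$.

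The plan is to work with the \emph{value set}
\[
\mathcal{A}=\Big\{(u,t)\in\mathbb{R}^{m}\times\mathbb{R}\ :\ \exists\,x\ \text{with}\ f_i(x)-b_i\le u_i\ (i=1,\dots,m)\ \text{and}\ f_0(x)\le t\Big\}.
\]
First I would verify that $\mathcal{A}$ is convex; convexity of $f_0,\dots,f_m$ enters here and nowhere else. Since $p^*=\inf\{t:(0,t)\in\mathcal{A}\}$ and $(0,s)\notin\mathcal{A}$ for $s<p^*$, the point $(0,p^*)$ is not interior to $\mathcal{A}$, so the supporting-hyperplane theorem yields a nonzero $(\mu,\mu_0)\in\mathbb{R}^{m}\times\mathbb{R}$ with $\langle\mu,u\rangle+\mu_0 t\ge\mu_0 p^*$ for all $(u,t)\in\mathcal{A}$. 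Because $\mathcal{A}$ is upward closed in every coordinate, letting one coordinate at a time tend to $+\infty$ forces $\mu\ge 0$ (entrywise) and $\mu_0\ge 0$.

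The one genuine step — and where Slater's condition is indispensable — is ruling out $\mu_0=0$. If $\mu_0=0$, then choosing $u_i=f_i(x)-b_i$ and $t=f_0(x)$ in the supporting inequality gives $\sum_i\mu_i\big(f_i(x)-b_i\big)\ge 0$ for every $x$, with some $\mu_i>0$; evaluating at the strictly feasible $\bar x$ gives $\sum_i\mu_i\big(f_i(\bar x)-b_i\big)<0$, a contradiction. Hence $\mu_0>0$, and rescaling so that $\mu_0=1$ turns the supporting inequality into $f_0(x)+\sum_i\mu_i(f_i(x)-b_i)\ge p^*$ for all $x$, i.e.\ $g(\mu)\ge p^*$; combined with weak duality, $d^*\ge g(\mu)\ge p^*\ge d^*$, so $d^*=p^*$ and $\mu$ is a dual optimum. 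I expect no obstacle beyond the case $\mu_0=0$: the convexity of $\mathcal{A}$ and the bookkeeping of signs are routine. I would close by noting that this is exactly the duality tool needed to identify the optimal $(r,\epsilon,B)$-spreading weight with a congestion-minimizing probability flow, as announced just above.
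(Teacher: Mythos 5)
The paper does not actually prove this statement: it quotes Slater's condition as a known result, citing Chapter 5 of Boyd and Vandenberghe, so there is no internal proof to compare against. Your supporting-hyperplane argument is precisely the standard textbook proof of that cited result, and it is essentially sound: convexity of the value set $\mathcal{A}$, a supporting hyperplane at $(0,p^*)$ (which lies in the closure of $\mathcal{A}$ but not its interior), nonnegativity of $(\mu,\mu_0)$ from upward closure, elimination of the degenerate case $\mu_0=0$ via the strictly feasible point, and weak duality to close the sandwich. Two small points deserve care. First, the hypothesis as the paper phrases it, ``the feasible region has non-empty interior,'' is not literally equivalent to the strict feasibility you invoke: a constraint whose function satisfies $f_i\ge b_i$ everywhere (for instance $f_i(x)=\max(x_1,0)$ with $b_i=0$) can leave the feasible set with non-empty interior while no point satisfies $f_i(\bar x)<b_i$, and then your contradiction in the case $\mu_0=0$ evaporates. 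So what you prove is the standard Slater statement (existence of a strictly feasible point), which is what the cited reference asserts and what the paper actually needs --- in the application in Theorem~\ref{thm:dual} one can take $d\succ 0$, $s\succ 0$ with $s^{\top}s<1$ and $\epsilon$ small, so strict feasibility does hold there. Second, you should dispose of the trivial case $p^*=-\infty$ separately (weak duality then forces $d^*=-\infty$), since the supporting-hyperplane normalization $\mu_0=1$ presupposes a finite $p^*$. With those two remarks your write-up is a complete and correct proof of the quoted theorem.
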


	\begin{theorem}\label{thm:dual}
		Let $G=(V,E)$ be a graph with boundary $B$ and let $r\leq |B|$. Then
		\begin{equation*}
			\mathrm{max}\{\epsilon_r(G,B,\omega)~|~\omega:V\rightarrow\mathbb{R}_{\geq0}\}=\frac{1}{r^2}\mathrm{min}\{\sqrt{\mathrm{con}(F)}~|~F\in \mathcal{F}_r(G,B)\}.
		\end{equation*}
	\end{theorem}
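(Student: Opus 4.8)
The plan is to identify both sides of the claimed equality as the two values, in strong duality, of one convex optimization problem. Since $d_\omega$ is homogeneous of degree $1$ in $\omega$, the quantity $\epsilon_r(G,B,\omega)$ is invariant under positive scaling of $\omega$; normalizing to $\|\omega\|_2=1$ and relaxing to the ball, the left-hand side equals
\begin{equation*}
	P \coloneqq \max\Bigl\{\,\min_{S\in\binom{B}{r}}\frac{1}{r^{2}}\sum_{u,v\in S}d_\omega(u,v) \;:\; \omega\geq 0,\ \sum_{v\in V}\omega(v)^{2}\leq 1\,\Bigr\},
\end{equation*}
the maximum being attained by continuity and compactness. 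First I would linearize the inner minimum: the map $\omega\mapsto d_\omega(u,v)=\min_{p\in\cP_{uv}}\sum_{w\in p}\omega(w)$ is concave, hence so is $\omega\mapsto\min_{S}\frac1{r^{2}}\sum_{u,v\in S}d_\omega(u,v)$, and since a linear functional on the simplex of probability distributions over $\binom{B}{r}$ attains its minimum at a vertex, $\min_{S}$ may be replaced by $\min_{\mu}$. Thus
\begin{equation*}
	P=\max_{\omega\geq0,\ \|\omega\|_{2}\leq1}\ \min_{\mu}\ \frac{1}{r^{2}}\ \mathbb{E}_{S\sim\mu}\sum_{u,v\in S}d_\omega(u,v).
\end{equation*}

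The key step is to turn the expectation into a flow quantity. Fix $\omega$ and $\mu$, and write $q_{uv}=\mathbb{P}_{S\sim\mu}[u,v\in S]$. For every $\mu$-flow $F$ we have, using $\omega\geq0$,
\begin{align*}
	\langle C_F,\omega\rangle
	&= \sum_{v\in V}C_F(v)\,\omega(v)
	 = \sum_{u,v}\ \sum_{p\in\cP_{uv}}F(p)\sum_{w\in p}\omega(w)\\
	&\geq \sum_{u,v}\Bigl(\sum_{p\in\cP_{uv}}F(p)\Bigr)d_\omega(u,v)
	 = \sum_{u,v}q_{uv}\,d_\omega(u,v)
	 = \mathbb{E}_{S\sim\mu}\ \sum_{u,v\in S}d_\omega(u,v),
\end{align*}
with equality whenever $F$ routes each demand $q_{uv}$ entirely along a $d_\omega$-geodesic in $\cP_{uv}$ (such an $F$ is still a $\mu$-flow). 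Since $\mathcal{F}_r(G,B)$ is, by definition, the union over all $\mu$ with $\mathrm{supp}(\mu)\subseteq\binom{B}{r}$ of the sets of $\mu$-flows, minimizing first over the $\mu$-flows and then over $\mu$ yields
\begin{equation*}
	P=\max_{\omega\geq0,\ \|\omega\|_{2}\leq1}\ \min_{F\in\mathcal{F}_r(G,B)}\ \frac{1}{r^{2}}\langle C_F,\omega\rangle .
\end{equation*}

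It remains to exchange the max and the min and to evaluate the inner problem. The set $\{\omega:\omega\geq0,\ \|\omega\|_{2}\leq1\}$ is convex and compact; $\mathcal{F}_r(G,B)$ is convex, since a convex combination of a $\mu_{1}$-flow and a $\mu_{2}$-flow is a $(t\mu_{1}+(1-t)\mu_{2})$-flow, and — restricting without loss to flows supported on simple paths, which changes neither the demands nor, by monotonicity of $C_F$, the quantities $\langle C_F,\omega\rangle$ and $\mathrm{con}(F)$ — it is compact as well; finally $(\omega,F)\mapsto\langle C_F,\omega\rangle$ is bilinear. Hence the minimax theorem applies — equivalently, strong duality holds for the associated convex program by Slater's condition (\cref{Slater's condition}), the all-equal weight $\omega\equiv\tfrac{1}{2\sqrt{|V|}}$ being strictly feasible — and
\begin{equation*}
	P=\min_{F\in\mathcal{F}_r(G,B)}\ \max_{\omega\geq0,\ \|\omega\|_{2}\leq1}\ \frac{1}{r^{2}}\langle C_F,\omega\rangle .
\end{equation*}
For fixed $F$ we have $C_F\geq0$ coordinatewise, so the Cauchy--Schwarz inequality gives $\max\{\langle C_F,\omega\rangle:\omega\geq0,\ \|\omega\|_{2}\leq1\}=\|C_F\|_{2}=\sqrt{\sum_{v}C_F(v)^{2}}=\sqrt{\mathrm{con}(F)}$, attained at $\omega=C_F/\|C_F\|_{2}$. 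Substituting this shows $P=\tfrac{1}{r^{2}}\min_{F\in\mathcal{F}_r(G,B)}\sqrt{\mathrm{con}(F)}$, which is the asserted identity.

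I expect the step needing the most care to be the max--min exchange: one must verify convexity and — since $\cP$ may contain arbitrarily long walks — compactness of $\mathcal{F}_r(G,B)$ after the reduction to simple-path flows, confirm that the associated convex program satisfies Slater's condition, and check that the relevant suprema and infima are attained so the statement may be written with $\max$ and $\min$. A smaller technical point is the geodesic-routing argument in the second step, in particular matching the sum $\sum_{u,v\in S}d_\omega(u,v)$ with the demands $F[u,v]=\mathbb{P}_{S\sim\mu}[u,v\in S]$ under the conventions fixed in the paper.
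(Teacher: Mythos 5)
Your argument is correct and reaches the stated identity, but it runs along a somewhat different track than the paper. The paper encodes the spreading problem as an explicit convex program in the variables $(\epsilon,d,s)$ via the path-incidence, path-connection and set-containment matrices $P,Q,R$, forms the Lagrangian with multipliers $(\mu,\lambda,\nu)$, and mechanically simplifies the Lagrange dual until the congestion-minimization problem over $\mathcal{F}_r(G,B)$ appears, with strong duality supplied by Slater's condition (Theorem~\ref{Slater's condition}). You instead set the problem up as a bilinear saddle point in $(\omega,F)$: you linearize $\min_S$ to $\min_\mu$, prove the key identity $\min\{\langle C_F,\omega\rangle : F\ \text{a}\ \mu\text{-flow}\}=\mathbb{E}_{S\sim\mu}\sum_{u,v\in S}d_\omega(u,v)$ by the path-length inequality with equality via geodesic routing, exchange max and min by a bilinear minimax theorem on compact convex sets, and evaluate the inner maximum by Cauchy--Schwarz to produce $\sqrt{\con(F)}$. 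This buys transparency: weak duality, the origin of $\sqrt{\con(F)}$, and the flow constraints are explicit rather than read off from a simplified dual, and your attention to attainment, compactness, and the reduction to simple-path flows is more careful than the paper's terse endgame (its passage from the inequality constraints $\lambda^{\top}Q\succeq\mu^{\top}R$, $\mu^{\top}\boldsymbol{1}\geq 1$ to equalities is only sketched). What the paper's route buys is that it matches the quoted Slater theorem verbatim and needs no separate verification of convexity and compactness of $\mathcal{F}_r(G,B)$. One point to settle in a final write-up (on which the paper itself is loose): fix once and for all whether $\sum_{u,v\in S}d_\omega(u,v)$ and the demands $F[u,v]=\mathbb{P}_{S\sim\mu}[u,v\in S]$ range over ordered or unordered pairs and whether diagonal terms are included, since the theorem asserts an exact equality and mismatched conventions would shift constant factors.
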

	\begin{proof}
		Our goal is to  write out the optimizations $\max_{\omega}\epsilon_r(G,B,\omega)$ and $\frac{1}{r^2}\min_F\sqrt{\mathrm{con}(F)}$ as convex problems, and then show that they are dual to each other. 
		
		  First, we  expand $\max_{\omega}\epsilon_r(G,B,\omega)$ as a convex problem $(\boldsymbol{P})$. Let $P\in \{0,1\}^{\cP\times V}$ be the path incidence matrix, $Q\in\{0,1\}^{\cP\times \binom{V}{2}}$ the path connection matrix, and $R\in\{0,1/r^2\}^{\binom{B}{r}\times\binom{V}{2}}$ a normalized set containment matrix, which are respectively defined as 
		\begin{align*}
			P_{p,v}=\begin{cases}
				1, & v\in p, \\ 
				0, & \text{else},
			\end{cases}
			~~~~Q_{p,uv}=\begin{cases}
				1, & p\in \cP_{uv}, \\ 
				0, & \text{else},
			\end{cases}
			~~~~R_{S,uv}=\begin{cases}
				1/r^2, & \{u,v\}\subset S, \\ 
				0, & \text{else}.
			\end{cases}
		\end{align*}
		Then the convex problem $(\boldsymbol{P})=\max\limits_{\omega}\epsilon_r(G,B,\omega)$ can be written as 
		\begin{align*}
			\text{minimize}~~~~~~~ &-\epsilon\\
			~~~\text{subject to}~~~\epsilon\boldsymbol{1}&\preceq Rd~~~Qd\preceq Ps~~~s^{\top}s\leq 1\\
			d&\succeq0~~~~~~~~s\succeq0
		\end{align*}
		We introduce the nonnegative Lagrange multipliers $\mu,\lambda,\nu$, and get the Lagrangian function as follows: 
		\begin{align*}
			L(d,s,\mu,\lambda,\nu)=-\epsilon+\mu^{\top}(\epsilon\boldsymbol{1}-Rd)+\lambda^{\top}(Qd-Ps)+\nu(s^{\top}s-1).
		\end{align*}
		According to the theory of Lagrange dual shown in \cite{boyd2004convex}, $(\boldsymbol{P})$ and its dual $(\boldsymbol{P^*})$ have the following equivalent forms: 
		\begin{align*} &(\boldsymbol{P})=\inf_{\epsilon,d,s}\sup_{\mu,\lambda,\nu}L(d,s,\mu,\lambda,\nu),\\&(\boldsymbol{P}^*)=\sup_{\mu,\lambda,\nu}\inf_{\epsilon,d,s}L(d,s,\mu,\lambda,\nu).  
		\end{align*}
        By rearranging terms in $L$, we have 
		\begin{align*}
(\boldsymbol{P}^*)=&\sup_{\mu,\lambda,\nu}\inf_{\epsilon,d,s}[(\mu^{\top}\boldsymbol{1}-1)\epsilon+(\lambda^{\top}Q-\mu^{\top}R)d+(\nu s^{\top}s-\lambda^{\top}Ps)-\nu]\\
			=&\sup_{\mu,\lambda,\nu}[\inf_{\epsilon}(\mu^{\top}\boldsymbol{1}-1)\epsilon+\inf_d(\lambda^{\top}Q-\mu^{\top}R)d+\inf_s(\nu s^{\top}s-\lambda^{\top}Ps)-\nu].
		\end{align*}
		The infima $\inf_{\epsilon}(\mu^{\top}\boldsymbol{1}-1)\epsilon$ and $\inf_d(\lambda^{\top}Q-\mu^{\top}R)d$ are either $0$ or $-\infty$. Thus, at the optimum, they must be zero and $\mu^{\top}\boldsymbol{1}-1\geq 0$, $\lambda^{\top}Q-\mu^{\top}R\succeq0$. With these two constraints, the optimization reduces to 
        $$\sup_{\lambda,\nu}[\inf_s(\nu s^{\top}s-\lambda^{\top}Ps)-\nu].$$ At the optimum, the gradient of $\nu s^{\top}s-\lambda^{\top}Ps$ is zero, which leads to $s=P^{\top}\lambda/2\nu$. Now we obtain the infimum  $-\frac{\norm{P^{\top}\lambda}_2^2}{4\nu}$. Then, due to the mean value inequality, $\nu=\frac{1}{2}\norm{P^{\top}\lambda}_2$  at the maximum so that it is equivalent to considering the supremum of $-\norm{P^{\top}\lambda}_2$. Above all, 
 we have shown that $(\boldsymbol{P}^*)$ can be simplified as the 
 following convex problem
		\begin{align*}
			\text{maximize}~~~ -\norm{P^{\top}&\lambda}_2\\
			~~~\text{subject to}~~~\lambda^{\top}Q\succeq&\mu^{\top}R~~~\mu^{\top}\boldsymbol{1}\geq1\\
			\lambda\succeq&0~~~~~~~~\mu\succeq0
		\end{align*}
        To minimize $\norm{P^{\top}\lambda}_2$, this problem is equivalent to 
        \begin{align*}
			\text{maximize}~~~ -\norm{P^{\top}&\lambda}_2\\
			~~~\text{subject to}~~~\lambda^{\top}Q=&\mu^{\top}R~~~\mu^{\top}\boldsymbol{1}=1\\
			\lambda\succeq&0~~~~~~~~\mu\succeq0
		\end{align*}
		which is precisely the negative of the problem to minimize $\frac{1}{r^2}\sqrt{\mathrm{con}(F)}$, where $F$ is taken over all the maps in $\mathcal{F}_r(G,B)$. The proof is complete by Theorem \ref{Slater's condition}.
	\end{proof}
    
	\subsection{Proof of~\texorpdfstring{\cref{thm:sigma_k}}{Theorem }}
 Let's begin with the definition and some results of ($c,a$)-congestion measure.
	\begin{definition}
		Given a host graph $G$, we say that $\mathrm{inter}_G$ is a $(c,a)$-congestion measure if  we have the inequality 
		\begin{align*}
			\mathrm{inter}_G(H)\geq \frac{|E(H)|^3}{c|V(H)|^2}-a|V(H)|.
		\end{align*}
        for all unit-weighted graphs $H=(V,E)$.
	\end{definition}

	\begin{theorem}[{\cite[Theorem 4.1]{GAFA}}]\label{thm:interG}
		There is a universal constant $c_0>0$ such that the following holds. Let $\mu$ be any probability distribution on subsets of $[n]$. For $u,v\in [n]$, define 
		\begin{align*}
			F(u,v)\coloneqq\mathbb{P}_{S\sim\mu}[u,v\in S],
		\end{align*}
		and let $H_{\mu}$ be the graph on $[n]$ weighted by $F$. For any graph $G$ such that $\mathrm{inter}_G$ is a $(c,a)$-congestion measure, we have 
		\begin{align*}
			\mathrm{inter}_G(H_{\mu})\gtrsim \frac{1}{cn}(\mathbb{E}|S|^2)^{5/2}-c_0\frac{a}{n}\mathbb{E}|S|^2.
		\end{align*}
	\end{theorem}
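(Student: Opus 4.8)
The plan is to reduce the weighted quantity $\mathrm{inter}_G(H_\mu)$ to the unit‑weighted hypothesis by a thresholding argument, using two elementary properties of $\mathrm{inter}_G(\,\cdot\,)$ as a function of the demand graph: it is monotone (enlarging the demands cannot decrease the minimum intersection), and it is quadratically homogeneous, $\mathrm{inter}_G(\lambda H)=\lambda^{2}\,\mathrm{inter}_G(H)$ for $\lambda\ge 0$. For a threshold $t>0$ let $H^{(t)}$ be the unit‑weighted graph on $[n]$ whose edges are the pairs $\{u,v\}$ with $F(u,v)\ge t$. If $F^{\ast}$ is an $H_\mu$‑flow in $G$ realizing $\mathrm{inter}_G(H_\mu)$, then $\tfrac1t F^{\ast}$ sends at least one unit across every edge of $H^{(t)}$, hence is an $H^{(t)}$‑flow, so $\mathrm{inter}_G(H_\mu)=\mathrm{inter}(F^{\ast})\ge t^{2}\,\mathrm{inter}_G(H^{(t)})$. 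Applying the $(c,a)$‑congestion hypothesis to the unit‑weighted graph $H^{(t)}$,
\[
  \mathrm{inter}_G(H_\mu)\ \ge\ t^{2}\!\left(\frac{|E(H^{(t)})|^{3}}{c\,|V(H^{(t)})|^{2}}-a\,|V(H^{(t)})|\right),
\]
and everything reduces to choosing $t$ and estimating $|E(H^{(t)})|$ and $|V(H^{(t)})|$.

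The relevant estimates are: (i) since $\sum_{\{u,v\}}F(u,v)=\tfrac12\bigl(\mathbb{E}|S|^{2}-\mathbb{E}|S|\bigr)$, the total weight of $H_\mu$ has order $\mathbb{E}|S|^{2}$ once $\mathbb{E}|S|^{2}$ exceeds a constant, so after discarding the (negligibly light) pairs with very small $F(u,v)$ there is a threshold $t$ at which $H^{(t)}$ carries a constant fraction of this weight and is therefore edge‑rich; and (ii) every vertex $v$ appearing in $H^{(t)}$ satisfies $\mathbb{P}[v\in S]\ge F(u,v)\ge t$ for some neighbour $u$, while $\sum_{v}\mathbb{P}[v\in S]=\mathbb{E}|S|$, so $|V(H^{(t)})|\le \mathbb{E}|S|/t$; together with the trivial bounds $|V(H^{(t)})|\le n$ and $|V(H^{(t)})|\le 2|E(H^{(t)})|$ this pins down the denominator. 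Feeding a threshold of order $t\asymp\sqrt{\mathbb{E}|S|^{2}}/n$ into the displayed inequality — so that the subtracted term $t^{2}\,a\,|V(H^{(t)})|$ has order $a\,\mathbb{E}|S|^{2}/n$, matching the second term of the claim — turns it into $\mathrm{inter}_G(H_\mu)\gtrsim \tfrac1{cn}(\mathbb{E}|S|^{2})^{5/2}-c_{0}\tfrac an\,\mathbb{E}|S|^{2}$; the extremal configuration is a disjoint union of $\asymp n/m$ cliques $K_{m}$ (with $m=\sqrt{\mathbb{E}|S|^{2}}$) carried with weight $m/n$, where this is sharp.

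The step I expect to be delicate is making the choice of $t$ work uniformly in $\mu$. A single threshold is not always the right one: when $\mu$ is very spread out the useful scale is the average edge weight $\asymp\mathbb{E}|S|^{2}/n^{2}$, when it is concentrated it is of order $1$, and only for the extremal configuration is $\sqrt{\mathbb{E}|S|^{2}}/n$ exactly correct. One must therefore run the argument at all dyadic scales $t\in\{2^{-j}\}$ and add the contributions: since $H_\mu\succeq\tfrac12\sum_{j}2^{-j}H^{(2^{-j})}$ edge by edge, monotonicity together with the superadditivity $\mathrm{inter}_G(H_1+H_2)\ge\mathrm{inter}_G(H_1)+\mathrm{inter}_G(H_2)$ for demand graphs on a common vertex set — itself a consequence of splitting a flow and expanding the quadratic form defining $\mathrm{inter}$, whose cross terms are nonnegative — gives $\mathrm{inter}_G(H_\mu)\ge\tfrac14\sum_{j}2^{-2j}\,\mathrm{inter}_G(H^{(2^{-j})})$. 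The real work is to carry this out without paying the $O(\log n)$ factor that a crude union bound over scales would cost; that is, to exploit that the scale at which $H^{(t)}$ is edge‑rich is forced to coincide with the scale at which it is vertex‑poor. (By contrast, the naive route of splitting an optimal $H_\mu$‑flow among the sets $S\in\mathrm{supp}(\mu)$, turning it into a $K_{S}$‑flow $\tfrac1{\mu(S)}g_{S}$ for each $S$, only yields $\mathrm{inter}_G(H_\mu)\ge\sum_{S}\mu(S)^{2}\,\mathrm{inter}_G(K_{S})$, which is far too weak when $\mathrm{supp}(\mu)$ is large, because superadditivity discards all the cross‑intersections between the $g_{S}$'s; thresholding is what avoids this loss.)
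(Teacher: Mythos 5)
You should first note that the paper does not prove this statement at all: it is imported verbatim from \cite[Theorem 4.1]{GAFA}, so there is no internal argument to compare yours with, and your attempt has to stand on its own. On its own it is an outline with a genuine gap rather than a proof. The elementary reductions you perform are correct: quadratic homogeneity and monotonicity of $\mathrm{inter}_G$ in the demand graph, the thresholding inequality $\mathrm{inter}_G(H_\mu)\ge t^2\,\mathrm{inter}_G(H^{(t)})$, superadditivity across a dyadic decomposition $H_\mu\succeq\tfrac12\sum_j 2^{-j}H^{(2^{-j})}$, and the vertex bound $|V(H^{(t)})|\le \mathbb{E}|S|/t$. But the quantitative heart of the theorem --- extracting the term $(\mathbb{E}|S|^2)^{5/2}/(cn)$ uniformly in $\mu$ while the subtracted term stays of size $a\,\mathbb{E}|S|^2/n$ --- is precisely the step you label ``the real work'' and do not carry out. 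That step is the theorem; without it nothing has been proved.

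Concretely, the single threshold $t\asymp\sqrt{\mathbb{E}|S|^2}/n$ that your displayed inequality is built around provably fails: for $\mu$ a point mass on a set of size $m=\sqrt{\mathbb{E}|S|^2}$ one has $H^{(t)}=K_m$, and the right-hand side of your inequality is about $\frac{m^2}{n^2}\bigl(\frac{m^4}{8c}-am\bigr)\approx\frac{m^6}{cn^2}$, which falls short of the target $\frac{m^5}{cn}$ by the unbounded factor $n/m$; the correct scale there is $t\asymp 1$, at which the negative term $t^2a|V(H^{(t)})|=am$ is far larger than $c_0a\,\mathbb{E}|S|^2/n$ and must be absorbed by the positive term. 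So the accounting cannot be done by matching the negative term at the chosen scale to $c_0a\,\mathbb{E}|S|^2/n$, as your heuristic does, and the dyadic summation you sketch does not by itself show that some scale (or combination of scales) always delivers the claimed bound without a logarithmic loss --- you would need to prove a scale-selection lemma of the form: there exists $t$ with $t^2\bigl(|E(H^{(t)})|^3/(c|V(H^{(t)})|^2)-a|V(H^{(t)})|\bigr)\gtrsim (\mathbb{E}|S|^2)^{5/2}/(cn)-c_0a\,\mathbb{E}|S|^2/n$, handling simultaneously the concentrated and spread-out regimes (and the degenerate regime where $\mathbb{E}|S|^2$ is of order a constant, where the bound must be made trivial by the choice of $c_0$). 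Your closing remark correctly diagnoses why the naive per-set splitting is too lossy, but the thresholding route as written is not yet a substitute for the argument in \cite{GAFA}, which does not proceed by a single threshold. Until that lemma (or an alternative argument) is supplied, the statement remains unproven by your proposal.
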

	
	\begin{lemma}[{\cite[Corollary 3.6]{GAFA}}]\label{lem:congestion measure}
		If $G$ is a planar graph, then $\mathrm{inter}_G$ is an $(\mathcal{O}(1),3)$-congestion measure. If $G$ is of genus $g>0$, then $\mathrm{inter}_G$ is an $(\mathcal{O}(g),\mathcal{O}(\sqrt{g})$-congestion measure. If $G$ is $K_h$-minor-free, then $\mathrm{inter}_G$ is an $(\mathcal{O}(h^2\mathrm{log}~h),\mathcal{O}(h\sqrt{\mathrm{log}~h})$-congestion measure. 
	\end{lemma}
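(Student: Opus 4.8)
The plan is to convert a lower bound on $\mathrm{inter}_G(H)$ into a crossing-number inequality for the demand graph $H$, using that the host $G$ either carries an embedding into a surface of small genus (the planar and genus $g$ cases) or is edge-sparse on every minor (the $K_h$-minor-free case); since the statement is quoted from \cite{GAFA}, I only sketch the route.

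\emph{Step 1: a linear bound.} First I would show that for each family $\mathrm{inter}_G(H) \geq |E(H)| - b\,|V(H)|$, where $b = \mathcal{O}(1)$ when $G$ is planar, $b = \mathcal{O}(\sqrt{g})$ when $G$ has genus $g$, and $b = \mathcal{O}(h\sqrt{\log h})$ when $G$ is $K_h$-minor-free. Take an $H$-flow $F$ in $G$ realizing $\mathrm{inter}_G(H)$ and fix an embedding of $G$ into the relevant surface $\Sigma$. Rounding $F$ to a single representative path per edge of $H$ and tracing these paths along the embedding yields a drawing of $H$ in $\Sigma$ whose crossings are points where two such paths share a vertex of $G$; deleting one edge of $H$ per crossing leaves a subgraph $H'$ with $|E(H')| \geq |E(H)| - \mathrm{inter}_G(H)$ drawn in $\Sigma$ without crossings, hence $H'$ embeds in $\Sigma$ (and in the minor-free case is a topological minor of $G$, so still $K_h$-minor-free). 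Euler's formula ($|E(H')| \leq 3|V(H')| - 6$ in the plane, $|E(H')| \leq 3|V(H')| + 6g - 6$ on the genus $g$ surface) and the Kostochka–Thomason bound $|E(H')| = \mathcal{O}(h\sqrt{\log h})\,|V(H')|$ then give the inequality; when $|V(H)|$ is too small relative to $g$ or $h$, the claimed final bound is already non-positive.

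\emph{Step 2: amplification.} Then I would bootstrap to the cubic bound by the standard crossing-number-lemma trick. For $p\in(0,1]$, keep each vertex of $H$ independently with probability $p$, let $H_p$ be the induced subgraph, and restrict $F$ to paths between surviving vertices; then $\mathbb{E}|V(H_p)| = p|V(H)|$, $\mathbb{E}|E(H_p)| = p^2|E(H)|$, and the restricted flow contributes intersection $p^4\,\mathrm{inter}_G(H)$ in expectation, since an edge of $H$ survives iff both endpoints do and an intersecting pair of paths with four distinct endpoints survives iff all four do. Applying Step 1 to $H_p$ and taking expectations,
\begin{equation*}
	p^4\,\mathrm{inter}_G(H) \;\geq\; \mathbb{E}\bigl[\mathrm{inter}_G(H_p)\bigr] \;\geq\; p^2|E(H)| - b\,p\,|V(H)| ,
\end{equation*}
so $\mathrm{inter}_G(H) \geq |E(H)|/p^2 - b|V(H)|/p^3$. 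Optimizing over $p$ (taking $p$ of order $b|V(H)|/|E(H)|$, feasible precisely when $|E(H)| \gtrsim b|V(H)|$) gives $\mathrm{inter}_G(H) \gtrsim |E(H)|^3/(b^2|V(H)|^2)$, while for $|E(H)| \lesssim b|V(H)|$ the quantity $|E(H)|^3/(b^2|V(H)|^2)$ is itself $\mathcal{O}(b|V(H)|)$; either way $\mathrm{inter}_G(H) \geq |E(H)|^3/(c|V(H)|^2) - a|V(H)|$ with $c = \Theta(b^2)$, $a = \Theta(b)$. Plugging in the three values of $b$ yields $(c,a) = (\mathcal{O}(1),\mathcal{O}(1)),\ (\mathcal{O}(g),\mathcal{O}(\sqrt g)),\ (\mathcal{O}(h^2\log h),\mathcal{O}(h\sqrt{\log h}))$, matching the lemma (a more careful planar count recovers the constant $3$).

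\emph{The main obstacle.} The delicate part is the rounding in Step 1: the optimal flow is fractional, and the paths serving two incident edges of $H$ may overlap far from their shared endpoint, so one cannot naively pick one path per edge while keeping the number of deleted edges at $\mathcal{O}(\mathrm{inter}_G(H))$. Controlling this rounding loss and uncrossing the resulting drawing is the real work; the probabilistic amplification of Step 2 is then routine.
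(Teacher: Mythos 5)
Note first that the paper does not prove this lemma at all: it is imported verbatim from \cite{GAFA}, so your sketch has to be measured against the proof in that source. Your overall architecture does match it: a linear ``density'' base bound (Euler's formula on the surface, Kostochka--Thomason for the minor-free case) amplified by independent vertex sampling, exactly as in the crossing-number lemma. Step 2 is sound as written: for the optimal flow $F$, the restriction to surviving vertices is an $H_p$-flow, $\mathbb{E}[\mathrm{inter}(F_p)]=p^4\,\mathrm{inter}(F)$ because every counted term involves four distinct vertices, and optimizing $p\asymp b|V(H)|/|E(H)|$ gives $c=\Theta(b^2)$, $a=\Theta(b)$, which reproduces all three parameter pairs (for genus $g$ you should either carry the additive $6g$ from Euler's formula through the optimization or note that every graph embeddable in genus $g$ has $|E|=\mathcal{O}(\sqrt{g})|V|$, but either way it works out).

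The genuine gap is Step 1, and it is sharper than a ``rounding loss'': by definition $\mathrm{inter}(F)$ only counts shared vertices between paths whose four endpoints are distinct, so overlaps between paths serving \emph{adjacent} edges of $H$ are completely invisible to it --- even an integral $H$-flow with $\mathrm{inter}(F)=0$ may have the paths for $uv$ and $uw$ sharing many internal vertices far from $u$. Consequently your claim in the minor-free case, that deleting one edge of $H$ per counted crossing leaves a subgraph $H'$ that is a topological minor of $G$, is false as stated: the number of deletions needed to make the surviving paths internally disjoint is not bounded by $\mathrm{inter}_G(H)$. What the cited argument supplies, and what your sketch is missing, is (i) the randomized rounding made explicit --- pick one path per demand with probability $F(p)/F[u,v]$, so that $F[u,v]\geq 1$ gives expected counted crossings at most $\mathrm{inter}(F)$ --- and (ii) a structural step handling adjacent demands: paths sharing an endpoint can be absorbed into the branch set of that endpoint when building $H'$ as a minor of $G$, and in the surface cases crossings between adjacent edges of the induced drawing can be removed by the standard swap/uncrossing argument without increasing the other crossings. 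You correctly identify this overlap issue as ``the real work,'' but naming the obstacle is not the same as resolving it; as submitted, the step carrying all the content of the lemma is asserted rather than proven.
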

	
	Now we  give the proof of~\cref{thm:sigma_k}.
	\begin{proof}[Proof of~\cref{thm:sigma_k}]
		We prove the case with given genus, and   other cases follow similarly. Let $G=(V,E)$ be a graph with boundary $B$ of genus $g$ and maximum degree. First, for $2\leq k\leq\delta\card{B}/4$, by~\cref{thm:key}, we see that for any $0<\delta<1$ and any vertex weight  $\omega: V\rightarrow \mathbb{R}_{\geq 0}$ with 
		\begin{equation}\label{eq:sum_w(v)^2=1}
			\sum_{v\in V}\omega(v)^2=1,
		\end{equation}
		we have $$\sigma_k\leq \frac{128D}{\delta|B|}(\frac{\alpha}{\epsilon})^2,$$
		where $\epsilon=\epsilon_{\floor{\delta|B|/4k}}(G,B,\omega)$ and $\alpha=\alpha(V,d_{\omega};\delta,\frac{\epsilon}{2})$.
		\cref{thm:genus-g-alpha-bound} shows that  $$\alpha(V,d_{\omega};1/8,\frac{\epsilon}{2})=\mathcal{O}(\mathrm{log}~g)$$
        for any $\omega$ and $\epsilon$. Hence 
		\begin{align}\label{eq:key in mian proof}
			\sigma_k\lesssim \frac{D(\mathrm{log}~g)^2}{(\epsilon_{\floor{|B|/32k}}(G,B,\omega))^2|B|}
		\end{align}
		
		If $F\in \mathcal{F}_r(G,B)$, then there exists a probability distribution $\mu$ on subsets of $V$ such that $F$ is a $\mu$-flow and $\mathrm{supp}(\mu)\subseteq \binom{B}{r}$. Note that $\mu$ restricted to subsets of $B$ denoted by $\mu|_B$ is  a probability distribution over subsets of $B$ due to $\mathrm{supp}(\mu)\subseteq \binom{B}{r}$.
		Let $H_{\mu|_B}$ be the graph on $B$ weighted by $f(u,v)=\mathbb{P}_{S\sim\mu|_B}[u,v\in S]$. Then $F$ is a $H_{\mu|_B}$-flow in $G$ via the injective mapping $\phi:V(B)\rightarrow V(G)$ defined by $\phi(u)=u$. Therefore, we see that
		\begin{align*}
			\mathrm{inter}_G(H_{\mu|_B})=&\min_{\text{$F$ an $H_{\mu|_B}$-flow in $G$}}\mathrm{inter}(F)\\
			\leq&\min_{\text{$F$ an $H_{\mu|_B}$-flow in $G$}}\mathrm{con}(F)\\
			\leq& \min_{F\in \mathcal{F}_r(G,B)}\mathrm{con}(F).
		\end{align*}
		Using~\cref{thm:interG}, we have 
		\begin{align*}
			\min_{F\in \mathcal{F}_r(G,B)}\mathrm{con}(F)\geq& \mathrm{inter}_G(H_{\mu|_B})\\
			\gtrsim&\frac{1}{c|B|}(\mathbb{E}|S|^2)^{5/2}-c_0\frac{a}{|B|}\mathbb{E}|S|^2\\
			=&\frac{r^5}{c|B|}-c_0\frac{ar^2}{|B|},
		\end{align*}
		where the last step comes from $\mathrm{supp}(\mu)\subseteq \binom{B}{r}$.
		Hence by~\cref{lem:congestion measure}, there exists a constant $C_0> 0$ such that for any $C_0\sqrt{g}\leq r\leq |B|$, 
		\begin{align*}
			\min_{F\in \mathcal{F}_r(G,B)}\mathrm{con}(F)\gtrsim\frac{r^5}{g|B|}.
		\end{align*}
		Combining with \cref{thm:dual}, it implies that there exists a weight $\omega_r: V\rightarrow \mathbb{R}_{\geq 0}$ with $\epsilon_r(G,B,\omega_r)\gtrsim\frac{1}{r^2}\sqrt{r^5/g|B|}=\sqrt{r/g|B|}$ for $r\geq C_0\sqrt{g}$.
		
		 By using~\eqref{eq:key in mian proof}, we have 
		\begin{align*}
			\sigma_k\lesssim \frac{D(\mathrm{log}~g)^2}{(\epsilon_{r}(G,B,\omega_r))^2|B|}\lesssim\frac{D g(\mathrm{log}~g)^2}{r}\lesssim D g(\mathrm{log}~g)^2\frac{k}{|B|},
		\end{align*}
		for $r=\floor{|B|/32k}\geq C_0\sqrt{g}$.
        If $\floor{|B|/32k}< C_0\sqrt{g}$ or $k\geq\delta\card{B}/4$, then $\sigma_k=\mathcal{O}(D g(\mathrm{log}~g)^2\frac{k}{|B|})$ holds trivially due to the bound $\sigma_k\leq 2D$ obtained by  Lemma \ref{lem:Rf} for all $1\leq k\leq |B|$. 
    
    Our proof is complete.
	\end{proof}
    \section*{Declaration of competing interest}

The authors declare that they have no known competing financial interests or personal relationships that could have appeared to influence the work reported in this paper.

\section*{Data availability}

No data was used for the research described in the article.

	\bibliographystyle{abbrv}
	\bibliography{main}
	
\end{document}